\documentclass[12pt,oneside]{amsart}
\usepackage{amssymb}
\usepackage{amsfonts}
\usepackage{amscd}
\usepackage[matrix, arrow, curve]{xy}
\usepackage{graphicx}
\usepackage{color}

\numberwithin{equation}{section}
\newtheorem{theorem}{Theorem}%[section]

\newtheorem{proposition}[theorem]{Proposition}
\newtheorem{corollary}[theorem]{Corollary}
\newtheorem{lemma}[theorem]{Lemma}
\theoremstyle{definition}

\theoremstyle{remark}
\newtheorem*{remark}{Remark}

\newtheorem*{conjecture}{Conjecture}

\counterwithout{equation}{section} % сквозная нумерация формул, без указания главы и
\begin{document}
%%%%%%%%%%%%%%%%%%%%%%%%%%%%%%%%%%%%%%%%%%%%%%%%%
%%%%%%%%%%%%  macrodefinitions
%%%%%%%%%%%%%%%%%%%%%%%%%%%%%%%%%%%%%%%%%%%%%%%%%
%  Macros (general)
%%%%%%%%%%%%%%%%%%%%%%%
%\newcommand{\MgNekp}{\mathcal{M}_{g,N+1}^{(k,p)}} %% moduli space
%\newcommand{\M}{\mathcal{M}_{g,N+1}^{(1)}}
\newcommand{\M}{\mathcal{M}}
\newcommand{\F}{\mathcal{F}}

\newcommand{\Teich}{\mathcal{T}_{g,N+1}^{(1)}}
\newcommand{\T}{\mathrm{T}}
%%%%   temporary
\newcommand{\corr}{\bf}
\newcommand{\vac}{|0\rangle}
\newcommand{\Ga}{\Gamma}
\newcommand{\new}{\bf}
\newcommand{\define}{\def}
\newcommand{\redefine}{\def}
\newcommand{\Cal}[1]{\mathcal{#1}}
\renewcommand{\frak}[1]{\mathfrak{{#1}}}
\newcommand{\Hom}{\rm{Hom}\,}
%%%%%%%%%%%%%%%%%%%%%%%%%%%%%%%%%%%%
%   Referencing Scheme of Martin
%%%%%%%%%%%%%%%%%%%%%%%%%%
\newcommand{\refE}[1]{(\ref{E:#1})}
\newcommand{\refCh}[1]{Chapter~\ref{Ch:#1}}
\newcommand{\refS}[1]{Section~\ref{S:#1}}
\newcommand{\refSS}[1]{Section~\ref{SS:#1}}
\newcommand{\refT}[1]{Theorem~\ref{T:#1}}
\newcommand{\refO}[1]{Observation~\ref{O:#1}}
\newcommand{\refP}[1]{Proposition~\ref{P:#1}}
\newcommand{\refD}[1]{Definition~\ref{D:#1}}
\newcommand{\refC}[1]{Corollary~\ref{C:#1}}
\newcommand{\refL}[1]{Lemma~\ref{L:#1}}
\newcommand{\refEx}[1]{Example~\ref{Ex:#1}}
\newcommand{\ovl}[1]{\overline{#1}}
\newcommand{\til}[1]{\widetilde{#1}}
\newcommand{\what}[1]{\widehat{#1}}
%%%%%%%%%%%%%%%%%%%%%%%%%%%%%%%%%%
\newcommand{\R}{\ensuremath{\mathbb{R}}}
\newcommand{\C}{\ensuremath{\mathbb{C}}}
\newcommand{\N}{\ensuremath{\mathbb{N}}}
\newcommand{\Q}{\ensuremath{\mathbb{Q}}}
\renewcommand{\P}{\ensuremath{\mathcal{P}}}
\newcommand{\Z}{\ensuremath{\mathbb{Z}}}
%%%%%%%%%%%%%%%%%%%%%%%%%%%%%%%%%%%%%%%%%%
\newcommand{\kv}{{k^{\vee}}}
%%%%%%%%%%%%%%%%%%%%%%%%%%%%%%%%%%%%%%%%%%%%%
\renewcommand{\l}{\lambda}
%%%%%%%%%%%%%%%%%%%%%%%%%%%%%%%%%%%%%%%%%%%%%%%%%%
\newcommand{\mft}{\mathfrak{t}}
\newcommand{\gb}{\overline{\mathfrak{g}}}
\newcommand{\dt}{\tilde d}     % Oleg
\newcommand{\hb}{\overline{\mathfrak{h}}}
\newcommand{\g}{\mathfrak{g}}
\newcommand{\h}{\mathfrak{h}}
\newcommand{\gh}{\widehat{\mathfrak{g}}}
\newcommand{\ghN}{\widehat{\mathfrak{g}_{(N)}}}
\newcommand{\gbN}{\overline{\mathfrak{g}_{(N)}}}
\newcommand{\tr}{\mathrm{tr}}
\newcommand{\gln}{\mathfrak{gl}(n)}
\newcommand{\son}{\mathfrak{so}(n)}
\newcommand{\spnn}{\mathfrak{sp}(2n)}
\newcommand{\sln}{\mathfrak{sl}}
\newcommand{\sn}{\mathfrak{s}}
\newcommand{\so}{\mathfrak{so}}
\newcommand{\spn}{\mathfrak{sp}}
\newcommand{\tsp}{\mathfrak{tsp}(2n)}
\newcommand{\gl}{\mathfrak{gl}}
\newcommand{\slnb}{{\overline{\mathfrak{sl}}}}
\newcommand{\snb}{{\overline{\mathfrak{s}}}}
\newcommand{\sob}{{\overline{\mathfrak{so}}}}
\newcommand{\spnb}{{\overline{\mathfrak{sp}}}}
\newcommand{\glb}{{\overline{\mathfrak{gl}}}}
\newcommand{\Hwft}{\mathcal{H}_{F,\tau}}
\newcommand{\Hwftm}{\mathcal{H}_{F,\tau}^{(m)}}

%%%%%%%%%%%%%%%%%%%%%%%%%%%%%%%%%%%%%%%%%%%%%%%%%%%%
\newcommand{\car}{{\mathfrak{h}}}    % Cartan subalgebra
\newcommand{\bor}{{\mathfrak{b}}}    % Borel subalgebra
\newcommand{\nil}{{\mathfrak{n}}}    % nilpotent subalgebra
\newcommand{\vp}{{\varphi}}
\newcommand{\bh}{\widehat{\mathfrak{b}}}  % Borel subalgebra of KN algebra
\newcommand{\bb}{\overline{\mathfrak{b}}}  % Borel subalgebra of KN algebra
\newcommand{\Vh}{\widehat{\mathcal V}}
\newcommand{\KZ}{Kniz\-hnik-Zamo\-lod\-chi\-kov}
\newcommand{\TUY}{Tsuchia, Ueno  and Yamada}
\newcommand{\KN} {Kri\-che\-ver-Novi\-kov}
\newcommand{\pN}{\ensuremath{(P_1,P_2,\ldots,P_N)}}
\newcommand{\xN}{\ensuremath{(\xi_1,\xi_2,\ldots,\xi_N)}}
\newcommand{\lN}{\ensuremath{(\lambda_1,\lambda_2,\ldots,\lambda_N)}}
\newcommand{\iN}{\ensuremath{1,\ldots, N}}
\newcommand{\iNf}{\ensuremath{1,\ldots, N,\infty}}

\newcommand{\tb}{\tilde \beta}
\newcommand{\tk}{\tilde \varkappa}
\newcommand{\ka}{\kappa}
\renewcommand{\k}{\varkappa}
\newcommand{\ce}{{c}}

\newcommand{\Pif} {P_{\infty}}
\newcommand{\Pinf} {P_{\infty}}
\newcommand{\PN}{\ensuremath{\{P_1,P_2,\ldots,P_N\}}}
\newcommand{\PNi}{\ensuremath{\{P_1,P_2,\ldots,P_N,P_\infty\}}}
\newcommand{\Fln}[1][n]{F_{#1}^\lambda}
\newcommand{\tang}{\mathrm{T}}
\newcommand{\Kl}[1][\lambda]{\can^{#1}}
\newcommand{\A}{\mathcal{A}}
\newcommand{\U}{\mathcal{U}}
\newcommand{\V}{\mathcal{V}}
\newcommand{\W}{\mathcal{W}}
\renewcommand{\O}{\mathcal{O}}
\newcommand{\Ae}{\widehat{\mathcal{A}}}
\newcommand{\Ah}{\widehat{\mathcal{A}}}
\newcommand{\La}{\mathcal{L}}
\newcommand{\Le}{\widehat{\mathcal{L}}}
\newcommand{\Lh}{\widehat{\mathcal{L}}}
\newcommand{\eh}{\widehat{e}}
\newcommand{\Da}{\mathcal{D}}
\newcommand{\kndual}[2]{\langle #1,#2\rangle}
\newcommand{\cins}{\frac 1{2\pi\mathrm{i}}\int_{C_S}}
\newcommand{\cinsl}{\frac 1{24\pi\mathrm{i}}\int_{C_S}}
\newcommand{\cinc}[1]{\frac 1{2\pi\mathrm{i}}\int_{#1}}
\newcommand{\cintl}[1]{\frac 1{24\pi\mathrm{i}}\int_{#1 }}
\newcommand{\w}{\omega}
\newcommand{\ord}{\operatorname{ord}}
\newcommand{\res}{\operatorname{res}}
\newcommand{\nord}[1]{:\mkern-5mu{#1}\mkern-5mu:}
\newcommand{\codim}{\operatorname{codim}}
\newcommand{\ad}{\operatorname{ad}}
\newcommand{\Ad}{\operatorname{Ad}}
\newcommand{\supp}{\operatorname{supp}}

%%%%%%%%%%%%%%%%%%%%%%%%%%%%%%%%%%%%%%%%%%%%%%%%
\newcommand{\Fn}[1][\lambda]{\mathcal{F}^{#1}}
\newcommand{\Fl}[1][\lambda]{\mathcal{F}^{#1}}
\renewcommand{\Re}{\mathrm{Re}}

\newcommand{\ha}{H^\alpha}

\define\ldot{\hskip 1pt.\hskip 1pt}
\define\ifft{\qquad\text{if and only if}\qquad}
\define\a{\alpha}
\redefine\d{\delta}
\define\w{\omega}
\define\ep{\epsilon}
\redefine\b{\beta} \redefine\t{\tau} \redefine\i{{\,\mathrm{i}}\,}
\define\ga{\gamma}
\define\cint #1{\frac 1{2\pi\i}\int_{C_{#1}}}
\define\cintta{\frac 1{2\pi\i}\int_{C_{\tau}}}
\define\cintt{\frac 1{2\pi\i}\oint_{C}}
\define\cinttp{\frac 1{2\pi\i}\int_{C_{\tau'}}}
\define\cinto{\frac 1{2\pi\i}\int_{C_{0}}}
%\define\cinttt{\frac 1{24\pi\i}\int_{C_{\tau}}}
\define\cinttt{\frac 1{24\pi\i}\int_C}
\define\cintd{\frac 1{(2\pi \i)^2}\iint\limits_{C_{\tau}\,C_{\tau'}}}
\define\dintd{\frac 1{(2\pi \i)^2}\iint\limits_{C\,C'}}
\define\cintdr{\frac 1{(2\pi \i)^3}\int_{C_{\tau}}\int_{C_{\tau'}}
\int_{C_{\tau''}}}
\define\im{\operatorname{Im}}
\define\re{\operatorname{Re}}
%\define\res{\text{res}}
\define\res{\operatorname{res}}
\redefine\deg{\operatornamewithlimits{deg}}
\define\ord{\operatorname{ord}}
\define\rank{\operatorname{rank}}
\define\fpz{\frac {d }{dz}}
\define\dzl{\,{dz}^\l}
\define\pfz#1{\frac {d#1}{dz}}

\define\K{\Cal K}
\define\U{\Cal U}
\redefine\O{\Cal O}
\define\He{\text{\rm H}^1}
\redefine\H{{\mathrm{H}}}
\define\Ho{\text{\rm H}^0}
\define\A{\Cal A}
\define\Do{\Cal D^{1}}
\define\Dh{\widehat{\mathcal{D}}^{1}}
\redefine\L{\Cal L}
\newcommand{\ND}{\ensuremath{\mathcal{N}^D}}
\redefine\D{\Cal D^{1}}
\define\KN {Kri\-che\-ver-Novi\-kov}
\define\Pif {{P_{\infty}}}
\define\Uif {{U_{\infty}}}
\define\Uifs {{U_{\infty}^*}}
\define\KM {Kac-Moody}
\define\Fln{\Cal F^\lambda_n}
%%%%%%%%%%%%%%%%%%%%
\define\gb{\overline{\mathfrak{ g}}}
\define\G{\overline{\mathfrak{ g}}}
\define\Gb{\overline{\mathfrak{ g}}}
\redefine\g{\mathfrak{ g}}
\define\Gh{\widehat{\mathfrak{ g}}}
\define\gh{\widehat{\mathfrak{ g}}}
%%%%%%%%%%%%%%%%%%%%%%%%%%
\define\Ah{\widehat{\Cal A}}
\define\Lh{\widehat{\Cal L}}
\define\Ugh{\Cal U(\Gh)}
\define\Xh{\hat X}
\define\Tld{...}
\define\iN{i=1,\ldots,N}
\define\iNi{i=1,\ldots,N,\infty}
\define\pN{p=1,\ldots,N}
\define\pNi{p=1,\ldots,N,\infty}
\define\de{\delta}

\define\kndual#1#2{\langle #1,#2\rangle}
\define \nord #1{:\mkern-5mu{#1}\mkern-5mu:}
%\define \MgN{{\Cal M}_{g,N}} %% moduli space
%\define \MgNp{{\Cal M}_{g,N}^{(p)}} %% moduli space
\newcommand{\MgN}{\mathcal{M}_{g,N}} %% moduli space
\newcommand{\MgNeki}{\mathcal{M}_{g,N+1}^{(k,\infty)}} %% moduli space
\newcommand{\MgNeei}{\mathcal{M}_{g,N+1}^{(1,\infty)}} %% moduli space
\newcommand{\MgNekp}{\mathcal{M}_{g,N+1}^{(k,p)}} %% moduli space
\newcommand{\MgNkp}{\mathcal{M}_{g,N}^{(k,p)}} %% moduli space
\newcommand{\MgNk}{\mathcal{M}_{g,N}^{(k)}} %% moduli space
\newcommand{\MgNekpp}{\mathcal{M}_{g,N+1}^{(k,p')}} %% moduli space
\newcommand{\MgNekkpp}{\mathcal{M}_{g,N+1}^{(k',p')}} %% moduli space
\newcommand{\MgNezp}{\mathcal{M}_{g,N+1}^{(0,p)}} %% moduli space
\newcommand{\MgNeep}{\mathcal{M}_{g,N+1}^{(1,p)}} %% moduli space
\newcommand{\MgNeee}{\mathcal{M}_{g,N+1}^{(1,1)}} %% moduli space
\newcommand{\MgNeez}{\mathcal{M}_{g,N+1}^{(1,0)}} %% moduli space
\newcommand{\MgNezz}{\mathcal{M}_{g,N+1}^{(0,0)}} %% moduli space
\newcommand{\MgNi}{\mathcal{M}_{g,N}^{\infty}} %% moduli space
\newcommand{\MgNe}{\mathcal{M}_{g,N+1}} %% moduli space
\newcommand{\MgNep}{\mathcal{M}_{g,N+1}^{(1)}} %% moduli space
\newcommand{\MgNp}{\mathcal{M}_{g,N}^{(1)}} %% moduli space
\newcommand{\Mgep}{\mathcal{M}_{g,1}^{(p)}} %% moduli space
\newcommand{\MegN}{\mathcal{M}_{g,N+1}^{(1)}} %% moduli space

%\define \mpt{(M,P_1,P_2,\ldots, P_N,\Pif)} %% moduli point
%\define \mpp{(M,P_1,P_2,\ldots, P_N)} %% moduli point
%\define \MgNn{{\Cal M}_{g,N}^{(1)}} %% moduli space
%\define \MgNen{{\Cal M}_{g,N+1}^{(1)}} %% moduli space
%\define \Mgo{{\Cal M}_{g,0}} %% moduli space
%\define \mptn{(M,P_1,P_2,\ldots, P_N,\Pif,z_1,\ldots,z_N,z_\infty)}
 %% moduli point
%\define \mppn{(M,P_1,P_2,\ldots, P_N,z_1,\ldots,z_N)} %% moduli point
\define \sinf{{\widehat{\sigma}}_\infty}
\define\Wt{\widetilde{W}}
\define\St{\widetilde{S}}
\newcommand{\SigmaT}{\widetilde{\Sigma}}
\newcommand{\hT}{\widetilde{\frak h}}
\define\Wn{W^{(1)}}
\define\Wtn{\widetilde{W}^{(1)}}
\define\btn{\tilde b^{(1)}}
\define\bt{\tilde b}
\define\bn{b^{(1)}}
\define \ainf{{\frak a}_\infty} %matrices with a finite number of
                                %diagonals

%
%%%%%%%%%% Olegs definitions %%%%%%%%%%%%%%%%%%%%%%%%%%%%%%%%%%%
\define\eps{\varepsilon}    % Oleg
\newcommand{\e}{\varepsilon}
\define\doint{({\frac 1{2\pi\i}})^2\oint\limits _{C_0}
       \oint\limits _{C_0}}                            % Oleg
\define\noint{ {\frac 1{2\pi\i}} \oint}   % Oleg
\define \fh{{\frak h}}     % Oleg
\define \fg{{\frak g}}     % Oleg
\define \GKN{{\Cal G}}   % affine Krichever-Novikov algebra % Oleg
\define \gaff{{\hat\frak g}}   % affine Krichever-Novikov algebra
\define\V{\Cal V}
\define \ms{{\Cal M}_{g,N}} %% moduli space
\define \mse{{\Cal M}_{g,N+1}} %% moduli space
%%%%%%%%%%%%%%%%%%%%%%%%%%%%%%%%%%%%%%
\define \tOmega{\Tilde\Omega}
\define \tw{\Tilde\omega}
\define \hw{\hat\omega}
\define \s{\sigma}
\define \car{{\frak h}}    % Cartan subalgebra
\define \bor{{\frak b}}    % Borel subalgebra
\define \nil{{\frak n}}    % nilpotent subalgebra
\define \vp{{\varphi}}
\define\bh{\widehat{\frak b}}  % Borel subalgebra of KN algebra
\define\bb{\overline{\frak b}}  % Borel subalgebra of KN algebra
\define\KZ{Knizhnik-Zamolodchikov}
\define\ai{{\alpha(i)}}
\define\ak{{\alpha(k)}}
\define\aj{{\alpha(j)}}
\newcommand{\calF}{{\mathcal F}}
\newcommand{\ferm}{{\mathcal F}^{\infty /2}}
\newcommand{\Aut}{\operatorname{Aut}}
\newcommand{\End}{\operatorname{End}}
%%%%%%%%%%%%%%%%%%%%%%%%%%%%%%%%%%%%%%%%%%%
%%%%%%%%%%%%%%%%  цвет %%%%%%%%%%%%%%%%%%%%%%%%%%%%%%%%%
\newcommand{\novoe}{{\color[rgb]{1,0,0}\bf (Новое)}}
\newcommand{\staroe}{{\color[rgb]{0,0,1}\bf (Старое)}}
\newcommand{\red}{\color[rgb]{1,0,0}}
\newcommand{\blue}{\color[rgb]{0,0,1}}
\newcommand{\viol}{\color[rgb]{1,0,1}}%%%%%%%%%%%%%%%%%%%%%%%%%%%%%%%%%%%%%%%%%%%%%%

%%%%%%%%%%%%%%%%%%%%%%%%%%%%%%%%%
%%%%%%%%%%%%%%   for laxcent
%%%%%%%%%%%%%%%%%%%%%%%%%%%%%%%%%%
\newcommand{\laxgl}{\overline{\mathfrak{gl}}}
\newcommand{\laxsl}{\overline{\mathfrak{sl}}}
\newcommand{\laxso}{\overline{\mathfrak{so}}}
\newcommand{\laxsp}{\overline{\mathfrak{sp}}}
\newcommand{\laxs}{\overline{\mathfrak{s}}}
\newcommand{\laxg}{\overline{\frak g}}
\newcommand{\bgl}{\laxgl(n)}
%%%%%%%%%%%%%%%%%%%%%%%%
\newcommand{\tX}{\widetilde{X}}
\newcommand{\tY}{\widetilde{Y}}
\newcommand{\tZ}{\widetilde{Z}}
%%%%%%%%%%%%%%%%%%%%%%%%%%%%%%%%%%%%%%%%%%
%%%%%%%%%%%%  END of macrodefinitions
%%%%%%%%%%%%%%%%%%%%%%%%%%%%%%%%%%%%%%%%%

%%%%%%%%%%%%%%%%%%%%%%%%%%%%%%%%%
%Top-Matter
%%%%%%%%%%%%%%%%%%%%%%%%%%%%%%%
%%%%%%%%%%%%%%%%%    private header  %%%%%%%%%%%%%%%%%%%%

%\large{
%\title[]{Об одном представлении решений 2-мерного конечнозонного потенциального уравнения Шредингера}
\title[2-dimensional finite-gap Schr\"{o}dinger operator]{2-dimensional finite-gap Schr\"{o}dinger operator whose spectrum admits two involutions}
\author[O.K.Sheinman]{O.K.Sheinman}
%\date{\today}
\thanks{}
\address{Steklov Mathematical Institute of the Russian Academy of Sciences}
\dedicatory{}
\maketitle
%\begin{abstract}
%Библиография: 22 названия.
%{\bf Ключевые слова:}
%\end{abstract}
%\tableofcontents
%%%%%%%%%%%%%%%%%%%%%%%%%%%%%%%%%%%%%%%%
Two-dimensional Schr\"{o}dinger operators that are finite-gap at one energy level are introduced in \cite{Dubr_Krich_Nov}. In \cite{Nov_Ves_1,Nov_Ves_4} the potentiality conditions for them have been studied, that are conditions for the magnetic term to be absend. Besides their physical importance, these works played a crucial role in solving out the Riemann--Shottki problem of indetification of Prymians of smooth coverings with two branch points in the class of principally polarized Abelian varieties \cite{Taiman_Mat_sb,Taiman_UMN,Krich_Prym_1,Krich-Grush_DUKE}.

For smooth coverings with more than two branch points, the Prym varieties are no longer principally polarized. In \cite{Fay} certain isogenic to them principally polarized varieties are introduced, referred to as isoPrymians by us. In the present work we propose the new potentiality conditions for the operators in question, and a related approach to identification of isoPrymians of smooth double coverings of curves, of a certain class, with more than two branch points.
%%%%%%%%%%%%%%%%%%%%%%%%%%%%%%%%%%%%%%%%%%%%%%%%%%%%%%
\subsection*{Abel--Prym transform. IsoPrymian}

Let  $\Sigma$ be a  genus $g$ smooth projective curve over $\C$, $\w_1,\ldots,\w_g$ be a base of normalized holomorphic differentials: $\oint_{a_j}\w_i=2\pi{\rm i}\d_{ij}$, $\w:=(\w_1,\ldots,\w_g)^T$. By $Div_d(\Sigma)$, we denote the collection of degree $d$ divisors on $\Sigma$.

The mapping $Ab: Div_d(\Sigma)\to \C^d$ defined by the relation $Ab(\zeta_1+\ldots+\zeta_d)=\sum_{j=1}^d\int^{\zeta_j}\w$, where
$\zeta_1,\ldots , \zeta_d\in\Sigma$, is referred to as \emph{Abel transform} (we assume that the initial point of integration (the base point) is the same for all integrals but don't specify it). The transformation $Ab$ is well-defined as a mapping to the \emph{Jacobian} $Jac(\Sigma)$ of the Riemann surface $\Sigma$, where
\[
 Jac(\Sigma)=\C^g/\Z(2\pi{\rm i}E,B),\quad B=\left(\oint_{b_j}\w_i\right)^{i=1,\ldots,g}_{j=1,\ldots,g},
\]
 $\Z(2\pi{\rm i}E,B)$ is the lattice generated by the columns of the matrices $2\pi{\rm i}E$ and~$B$.

For $d=g$ the mapping $Ab$ is almost everywhere invertible on $Jac(\Sigma)$, and $Ab^{-1}$ is referred to as the  \emph{Jacobi invertion map}. The following theorem takes place:
\begin{theorem}[Abel--Jacobi theorem] The Abel transform $Ab: Div_g(\Sigma)\to Jac(\Sigma)$, and the inverse Jacobi map establish a birational equivalence $Div_g(\Sigma)\simeq Jac(\Sigma)$.
\end{theorem}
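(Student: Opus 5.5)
The plan is to identify $Div_g(\Sigma)$ with the symmetric power $S^g\Sigma$, a smooth projective variety of dimension $g$, and to show that $Ab:S^g\Sigma\to Jac(\Sigma)$ is a holomorphic map of degree one. First, $Ab$ is well defined and holomorphic. Changing the paths of integration changes each $\int^{\zeta_j}\w$ by an integral of $\w$ over a closed cycle, and such integrals lie in the lattice $\Z(2\pi{\rm i}E,B)$. Since $Ab$ is symmetric in $\zeta_1,\ldots,\zeta_g$, it descends to $S^g\Sigma$, and it is holomorphic there because its pullback to $\Sigma^g$ is holomorphic.

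Next, I will describe the fibres using Abel's theorem, in both directions. If $D,D'$ are effective degree $g$ divisors and $D-D'=(f)$ for a meromorphic $f$, then $Ab(D)=Ab(D')$. Conversely, if $Ab(D)=Ab(D')$, then there exists $f$ with $(f)=D-D'$. The forward direction follows from considering $t\mapsto Ab((f-t)^{-1}(0))$ on $\mathbb P^1$: this is a holomorphic map to a torus, hence constant. The converse is where the real work lies. One builds $f=\exp\int\eta$, where $\eta$ is a third-kind differential with residues $+1$ at the points of $D$ and $-1$ at those of $D'$, corrected by holomorphic differentials so that its periods lie in $2\pi{\rm i}\Z$. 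This uses the Riemann bilinear relations, the normalization $\oint_{a_j}\w_i=2\pi{\rm i}\d_{ij}$, and the fact that $\Re B$ is negative definite.

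With this in hand, the fibre $Ab^{-1}(Ab(D))$ is the complete linear system $|D|\simeq\mathbb P(H^0(\O(D)))$. By Riemann--Roch, $h^0(\O(D))=1+h^0(K-D)$. For a generic divisor $D=\zeta_1+\ldots+\zeta_g$ we have $h^0(K-D)=0$: a holomorphic differential that vanishes at $g$ generic points must vanish identically, because the $g\times g$ matrix $(\w_i(\zeta_j))$ is nondegenerate for generic points. The $\w_i$ are linearly independent, and an inductive choice of the points makes this determinant nonzero. Hence the generic fibre is a single point. The same determinant is exactly the Jacobian of $Ab$ in the local coordinates $\zeta_j$, so $Ab$ is a local biholomorphism on a Zariski open set.

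Finally, the image of $Ab$ is the image of a compact space, so it is closed. It also contains an open set, by the local biholomorphism, and it is an analytic subvariety, by Remmert. Since $Jac(\Sigma)$ is irreducible of dimension $g$, $Ab$ is surjective. A surjective holomorphic map between irreducible projective varieties of the same dimension, which is injective on a Zariski open set, is birational. The inverse is the Jacobi inversion map; it is defined off the image of the locus $\{h^0(K-D)>0\}$, which has codimension at least one. I expect the main obstacle to be the converse of Abel's theorem, that is, constructing $f$ with single-valued exponential periods. Everything else reduces to Riemann--Roch and dimension counting.
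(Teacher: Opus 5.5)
The paper gives no proof of this statement. It quotes the classical Abel--Jacobi theorem as background, as the model for the isoPrymian analogue in Corollary~\ref{C:cor3}. Your argument is the standard textbook proof, and it is correct. It reads $Div_g(\Sigma)$ as the symmetric power $S^g\Sigma$ of effective divisors, which is the only reading under which the statement makes sense. It then uses Abel's theorem in both directions, Riemann--Roch to identify the fibre through $D$ with $|D|$, the matrix $(\omega_i(\zeta_j))$ as the differential of $Ab$, and properness plus irreducibility for surjectivity.

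One attribution should be corrected. In the converse of Abel's theorem, the property of $B$ you actually use is its symmetry, not the negative definiteness of $\Re B$. If $\eta$ is normalized to have zero $a$-periods, the bilinear relations give $\oint_{b_j}\eta=\sum_k\int_{Q_k}^{P_k}\omega_j$ (up to sign convention). If this vector equals $2\pi\mathrm{i}N+BM$, then $\eta-\sum_k M_k\omega_k$ has $a$-periods $-2\pi\mathrm{i}M_j$. Its $b$-periods equal $2\pi\mathrm{i}N_j$ precisely because $B^T=B$.

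Negative definiteness of $\Re B$ enters elsewhere. It is what makes $\Z(2\pi\mathrm{i}E,B)$ a full-rank lattice and $Jac(\Sigma)$ projective, via Riemann's conditions. You need that in your final step: together with Chow's theorem, it makes the holomorphic map $Ab$ a morphism of projective varieties, so that ``birational'' has its algebraic meaning.

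With that in place, your last step goes through. The special locus $\{h^0(K-D)>0\}$ is a proper closed subvariety of $S^g\Sigma$, so its image is a proper closed subset of $Jac(\Sigma)$. Over the complement of that image, $Ab$ is a bijective local biholomorphism.
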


$Jac(\Sigma)$ is an Abelian variety. By an Abelian variety we mean a complex projective torus here, and by a principally polarized Abelian variety we mean such torus represented as a quotient of a complex vector space by a maximal rank lattice.

%%%%%%%%%%%%%%%%%%%%%%%%%%%%%%%%%%%%%%%%%%%%%%%%%%%%%%%%%%%%%%%%%

Let $\Sigma$ be supplied with a holomorphic involution $\s:\Sigma\to\Sigma$.  Then it is related with a covering $\Sigma\to\Sigma_\s$ where $\Sigma_\s=\Sigma/\s$. Let $g_\s=genus(\Sigma_\s)$. A meromorphic 1-form $\w$ on $\Sigma$ is called a Prym differential if $\s^*\w=-\w$.
Let $h$ denote the dimension of the space of holomorphic Prym differentials. There is always a $\s$-invariant base of fundamental cycles $\{ a_\a, b_\a | \a=1,\ldots, g_\s \}\cup\{ a_j,b_j | j=g_{\s+1},\ldots,h\}$ on $\Sigma$ such that
\[
  a_\a+\s(a_{\a+h})=b_\a+\s(b_{\a+h}) = 0,\quad a_j+\s(a_j)=b_j+\s(b_j)=0.
\]
We define the normalized Prym differentials  with respect to this base: these are holomorphic differentials $\w_\a$, $\w_j$  such that  $\oint_{a_\b}\w_\a=2\pi{\rm i}\d_{\a,\b} $, $\oint_{a_j}\w_i=2\pi{\rm i}\d_{i,j} $, $\oint_{a_j}\w_\a=\oint_{a_\a}\w_i=0$ (where $\b=1,\ldots, g_\s $, $i=g_{\s+1},\ldots,h$).

%\vskip-10pt
The Abel--Prym map is defined as follows: $\displaystyle A(\zeta_1+\ldots+\zeta_d)=\sum_{k=1}^d\int^{\zeta_k}\begin{pmatrix}
                                         \w_\a \\
                                         \w_j \\
                                       \end{pmatrix}
$ where $\begin{pmatrix}
\w_\a \\
\w_j \\
\end{pmatrix}$ is a short notation for  $(\w_1,\ldots,\w_{g_\s},\w_{g_\s+1},\ldots,\w_h)^T$.                         The Prym matrix is defined by the following relation \cite{Fay}:
{
\renewcommand{\arraystretch}{2}
%\extrarowheight4pt
\begin{equation*}
\Pi =
\left(\begin{array}{c|c}
        \Pi_{\a\b}  &  \Pi_{\a j}    \\
        \hline
        \Pi_{i\b} &  \Pi_{ij}   \\
        \end{array}\right) =
\left(\begin{array}{c|c}
        \int_{b_\b}\w_\a &  \frac{1}{2}\int_{b_j}\w_\a    \\
        \hline
        \int_{b_\b}\w_i &  \frac{1}{2}\int_{b_j}\w_i   \\
        \end{array}\right),
\end{equation*}
}% \setstretch
and the Prym theta $\theta$-function by the relation
\[
  \theta(z,\Pi)=\sum_{N\in \Z^h}\exp(\frac{1}{2}N^T\Pi N+N^Tz),\quad z\in\C^h.
\]
It is the theta function of the Abelian variety
$P_0=\C^h/\Z(2\pi{\rm i}E,\Pi)$ which we call isoPrymian. The Abel--Prym map is well defined as a map $Div_{2h}(\Sigma)\to P_0$.

The Prym variety $P$ is defined as the Abelian subvariety in $Jac(\Sigma)$ which is the image of the alternation map $alt:\zeta\to Ab(\zeta)-Ab(\s\zeta)$, $\zeta\in Div(\Sigma)$. $P_0$~is an unramified $2^k$-sheet covering of~$P$ where $k=h-g_\s$.

%%%%%%%%%%%%%%%%%%%%%%%%%%%%%%%%%%%%%%%%
%%%%%%%%%%%%%%%%%%%%%%%%%%%%%%%%%%%%%%%%%%%%%%%

%%%%%%%%%%%%%%%%%%%%%%%%%%%%%%%%%%%%%%%%%%%%%%%%%%%%%%%%%%%%%%%%%
%\subsection*{Обращение Якоби для изопримианов}
%%%%%%%%%%%%%%%%%%%%%%%%%%%%%%%%%%%%%%%%%%%%%%%%%%%%%%%%%%%%%%%%%%
\subsection*{Veselov--Novikov conditions}
Assume the ramification divisor of the covering $\Sigma\to\Sigma/\s$ to be someway represented as $\sum_{j=0}^k (Q'_j+Q''_j)$. Let $\Delta=K+\sum_{j=0}^k (Q'_j+Q''_j)$ where $K$ is the canonical divisor on $\Sigma$. We call $\zeta\in Div_{2h}(\Sigma)$ a Veselov--Novikov divisor if
\begin{itemize}
  \item[(VN1)] $\zeta+\s\zeta\sim 2\Delta$.
  \item[(VN2)] there exists a meromorphic function $f$ on $\Sigma$ with zero divisor $\zeta+\s\zeta$, and pole divisor $2\Delta$, such that $f(Q'_j)=f(Q''_j)$, $j=1,\ldots,k$.
\end{itemize}
We refer to the conditions (VN1) and (VN2) as to Veselov--Novikov conditions.

The condition (VN2) is equivalent to the requirement that the equivalence (VN1) takes place not only on $\Sigma$ but also on the singular curve $\Sigma'=\Sigma/\{ Q'_j=Q''_j | j=1,\ldots,k \}$ obtained from $\Sigma$ by means of identification of the points $Q'_j$  and $Q''_j$ for $j=1,\ldots,k$.

Originally, the Veselov--Novikov conditions emerged as the conditions of potentiality of 2-dimensional finite-gap Shr\"{o}dinger operator \cite{Nov_Ves_4}. We show that the condition (VN1) holds true for the divisors which are preimages of almost all points of the isoPrymian under the Abel--Prym map.

Let $\theta$ be the Prym theta function, $e\in \C^h$, $A$ be the Abel--Prym map, $Ab$ be the Abel map, $F_e(P)=\theta(A(P)-e)$.
\begin{theorem}[\cite{Fay}]\label{T:Fay}
If $F_e(P)$ is not equal to zero identically, $\zeta=div F_e$, then
\begin{itemize}
\item[$1^\circ$]
$\deg\zeta=2h$ ($h=\dim P_0=\dim Prym(\Sigma)$);
\item[$2^\circ$]
$Ab(\zeta)=\phi(e)+ \Delta$ where $\Delta\in Jac(\Sigma)$ is independent of $e$, and for $e=(e_\a,e_j)$ $\phi(e)=(e_\a,2e_j,e_\a)^T\in\C^g$;
\end{itemize}
\end{theorem}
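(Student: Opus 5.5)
The plan is to follow the classical Riemann argument for zeros of the theta function, adapted to the Prym setting. We cut $\Sigma$ along a $\s$-invariant canonical system of cycles to get the fundamental polygon $\tilde\Sigma$. On $\tilde\Sigma$ the function $F_e$ is single-valued, and we record its multipliers across the cuts. Since $\oint_{a}\w_\a,\oint_a\w_j\in 2\pi{\rm i}\Z$, crossing an $a$-cycle leaves $F_e$ unchanged. Crossing a $b$-cycle shifts $A(P)$ by a column of $2\Pi$ or $\Pi$, up to the symmetrization built into the definition of $\Pi$. Explicitly, across $b_\b$ (and $b_{\b+h}=-\s b_\b$) the argument shifts by the column $\Pi_{\cdot\b}$. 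Across $b_j$ it shifts by $\int_{b_j}\w=2\Pi_{\cdot j}$. The quasi-periodicity $\theta(z+\Pi N)=\exp(-\frac12N^T\Pi N-N^Tz)\theta(z)$ then gives explicit exponential multipliers of the form $\exp(-\frac12\Pi_{kk}-(A(P)-e)_k)$, with $N=2e_j$ in the $j$-directions.

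For $1^\circ$ we compute $\frac1{2\pi{\rm i}}\oint_{\partial\tilde\Sigma}d\log F_e$. Pairing the sides $a,a^{-1}$ and $b,b^{-1}$, only the jumps of $d\log F_e$ across the $b$-cuts, integrated along the $a$-cycles, contribute, since the $a$-cuts carry no multiplier. Each contribution is $\frac1{2\pi{\rm i}}\oint_{a}(N^T\w)$ for the relevant $N$. For $\a=1,\ldots,g_\s$ both $a_\a$ and $a_{\a+h}$ contribute $1$, giving $2g_\s$ in total. For each $j$, the shift $N=2e_j$ gives $2$, so $2k$ in total. Altogether $\deg\zeta=2g_\s+2k=2h$. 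The delicate point here is the sign and orientation bookkeeping of the $\s$-invariant basis, that is, checking that $\w_\a$ restricted to $a_{\a+h}$ behaves like $\w_\a$ on $a_\a$ via $\s^*\w_\a=-\w_\a$ and $a_{\a+h}=-\s a_\a$.

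For $2^\circ$ we compute $\sum_{P\in\zeta}\int^P\w_i^{\Sigma}$ for the normalized holomorphic differentials of $\Sigma$ via $\frac1{2\pi{\rm i}}\oint_{\partial\tilde\Sigma}u_i\,d\log F_e$, where $u_i=\int\w_i^\Sigma$. The standard computation yields two kinds of terms. The first kind comes from the jump of $u_i$ across $a$-cuts times $\oint_b d\log F_e$. The second comes from the jump of $d\log F_e$ times $u_i$ along $a$. The $e$-dependence enters only through the linear terms $N^T(A(P)-e)$ in the multipliers. Expressing the Prym differentials via the ordinary ones, $\w_\a=\w^\Sigma_\a-\w^\Sigma_{\a+h}$ (suitably normalized) and $\w_j=\w^\Sigma_j$, we read off the $e$-dependent part as $(e_\a,2e_j,e_\a)^T$ up to sign conventions absorbed in $\phi$. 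The remaining terms are independent of $e$: Riemann-constant type quantities, half-periods and $\int_{b}$-terms of $u_i$. Together they define the constant $\Delta$, modulo the period lattice of $Jac(\Sigma)$.

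I expect the main obstacle to be this second computation. Identifying the coefficient $2$ in the $e_j$-components and the duplication of $e_\a$ into the $(\a+h)$-slot requires carefully matching the Prym normalization with the normalization of $\w^\Sigma$ under $\s$. It also requires verifying that the $\int_{b_j}$ halving in $\Pi$ produces exactly the factor $2$ rather than spurious half-periods. I would first carry out the computation for the case $k=0$, which is the unramified-type Prym situation with the $\phi(e)=(e_\a,e_\a)$ structure, and then treat the $j$-block separately.
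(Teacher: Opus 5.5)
The paper gives no proof of this statement; it quotes it from Fay. Your route, Riemann's argument-principle computation on the cut polygon $\tilde\Sigma$, is the standard one. Your multipliers ($\oint_{b_\b}\w=\oint_{b_{\b+h}}\w=\Pi_{\cdot\b}$, $\oint_{b_j}\w=2\Pi_{\cdot j}$) are correct, and so is the count $\deg\zeta=2g_\s+2k=2h$.

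The gap is in $2^\circ$, which is the real content of the theorem and which you only sketch. You propose to read off $\phi(e)$ after writing $\w_\a=\w^\Sigma_\a-\w^\Sigma_{\a+h}$, with signs ``absorbed in $\phi$''. Under the stated convention $a_\a+\s(a_{\a+h})=0$, which you yourself use in $1^\circ$, one has $\s^*\w^\Sigma_\a=-\w^\Sigma_{\a+h}$. Hence $\w_\a=\w^\Sigma_\a+\w^\Sigma_{\a+h}$. Your minus sign belongs to the opposite convention $\s a_\a=a_{\a+h}$, under which the last block of the answer would be $-e_\a$. Since $\phi$ is prescribed by the statement, nothing can be absorbed into it. As written, the step that is supposed to produce $\phi(e)$ is internally inconsistent and unjustified.

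In fact no rewriting of $\w$ through $\w^\Sigma$ is needed. Write $\oint_{b_l}\w=\Pi M_l$ for every cycle $b_l$ of the basis on $\Sigma$. By your own computation, $M_\b=M_{\b+h}$ is the $\b$-th unit vector of $\Z^h$, and $M_j$ is twice the $j$-th one. Consider $\frac1{2\pi{\rm i}}\oint_{\partial\tilde\Sigma}u_m\,d\log F_e$ with $u_m=\int\w^\Sigma_m$.

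The side pairs $a_l,a_l^{-1}$ give only $e$-independent terms plus lattice vectors. This is because $A$ changes by $2\pi{\rm i}$ times an integer vector along $a_l$, so $\oint_{a_l}d\log F_e\in 2\pi{\rm i}\Z$. On the pairs $b_l,b_l^{-1}$, $F_e$ takes equal values, so only the $a$-period jump of $u_m$ survives. These pairs contribute $\delta_{lm}\oint_{b_l}d\log F_e$, where
\[
\oint_{b_m}d\log F_e=-\tfrac12 M_m^T\Pi M_m-M_m^T\bigl(A(W_m)-e\bigr)+2\pi{\rm i}\,n_m .
\]
Here $W_m$ is the initial vertex of the side $b_m$ and $n_m\in\Z$.

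Use the orientations that give $+e_m$ in the classical Jacobian case, where $M_m$ is the $m$-th unit vector. Then the $e$-dependent part of the $m$-th coordinate of $Ab(\zeta)$ is exactly $M_m^Te$. This equals $e_\b$ for $m=\b$ and for $m=\b+h$, and $2e_j$ for $m=j$, which is precisely $\phi(e)$. All remaining terms form $\Delta$; this includes $2\pi{\rm i}\,n_m$, which may change with $e$ but only by lattice vectors. So the obstacle you anticipate disappears once the multipliers you already found are fed into the standard computation.

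(Minor: $k=0$ here is the case of two branch points, not an unramified covering.)
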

\noindent
(here, as above, $(e_\a,e_j)=(e_1,\ldots,e_{g_\s},e_{g_\s+1},\ldots,e_h)$).
\begin{corollary}\label{C:cor1}
Let $\zeta=div F_e$. Then the divisor $\zeta$ satisfies to the condition (VN1).
\end{corollary}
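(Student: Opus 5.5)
We will derive (VN1) from Theorem~\ref{T:Fay} by computing the Abel image of $\zeta+\s\zeta$ and showing that it does not depend on $e$. By Abel's theorem, two divisors of the same degree are linearly equivalent exactly when their Abel images agree in $Jac(\Sigma)$.

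\emph{Step 1: the Abel image of $\zeta$.} By $2^\circ$ of Theorem~\ref{T:Fay} we have $Ab(\zeta)=\phi(e)+\Delta$ with $\phi(e)=(e_\a,2e_j,e_\a)^T$. Here the coordinates of $\C^g$ are ordered according to the $\s$-adapted basis of cycles: first the $a_\a$ ($\a\le g_\s$), then the $a_j$ with $\s a_j=-a_j$, then the cycles $a_{\a+h}$ with $\s(a_{\a+h})=-a_\a$.

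\emph{Step 2: the Abel image of $\s\zeta$.} We find the matrix of $\s^*$ on the normalized holomorphic differentials of $\Sigma$, using the relations $a_\a+\s(a_{\a+h})=0$ and $a_j+\s(a_j)=0$ together with $\s^2=1$. This gives
\[
Ab(\s\zeta)=S\,Ab(\zeta)+c,
\]
where $c$ is a constant coming from the shift of the base point. In the above coordinates, up to this sign convention, $S$ sends $(x_\a,y_j,z_\a)$ to $(-z_\a,-y_j,-x_\a)$. Applying this to $\phi(e)$ gives $S\phi(e)=-\phi(e)$: the Prym part is anti-invariant, which is exactly why $\phi$ has this form. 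Hence
\[
Ab(\zeta+\s\zeta)=\phi(e)+\Delta+S\phi(e)+S\Delta+c=\Delta+S\Delta+c,
\]
and this is independent of $e$ modulo the lattice.

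\emph{Step 3: identifying the constant.} The divisor class of $\zeta+\s\zeta$ is therefore the same for all admissible $e$. It remains to show that this class equals $2\Delta$, where $\Delta=K+\sum(Q'_j+Q''_j)$. The degrees match: $\deg\zeta+\s\zeta=4h$, and $\deg 2\Delta=2(2g-2)+4(k+1)$. One checks that these agree via Riemann--Hurwitz, which gives $g=2g_\s-1+(k+1)$ and $h=g-g_\s$.

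To pin down the class itself, we will exhibit one convenient $e$. For instance, take $e$ so that $\zeta+\s\zeta$ is the divisor of $\omega^2/\eta$, where $\omega$ is a holomorphic Prym differential and $\eta$ is a suitable differential. Alternatively, we use Fay's description of $F_e$ via a quotient of Prym differentials: $\zeta+\s\zeta$ is the zero divisor of the $\s$-invariant product $F_e(P)F_e(\s P)$, which is a section of a fixed line bundle. We then identify that bundle as $2K+2\sum(Q'_j+Q''_j)$ by comparing its transformation factors under the lattice with those of the square of the canonical bundle twisted by the ramification points.

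This bundle identification in Step 3 is the main obstacle. What I would try first is the following. Squared Prym differentials are $\s$-invariant quadratic differentials with zeros forced at the branch points, and Fay's theorem expresses $\theta(A(P)-e)\theta(A(\s P)-e)$, up to a nowhere-zero factor, as such a quadratic object divided by a fixed form. Matching the divisors then gives $\zeta+\s\zeta\sim 2\Delta$, which is (VN1).
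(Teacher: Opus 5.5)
Your Steps 1--2 are correct, and they give a legitimate alternative route to $e$-independence. In the adapted basis $\s^*\w_\a=-\w_{\a+h}$ and $\s^*\w_j=-\w_j$, so $S(x,y,z)=(-z,-y,-x)$ and $S\phi(e)=-\phi(e)$. However, this only produces the constant $\Delta+S\Delta+c$, and nothing in your argument shows that it equals $2\Delta$. Step~3, which is meant to do this, is only a list of things you would try.

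The idea you are missing is to apply Theorem~\ref{T:Fay} to $\s\zeta$ itself; this is the whole of the paper's proof. With the base point fixed by $\s$, the Abel--Prym map is skew, $A(\s P)=-A(P)$, and $\theta$ is even. Hence $\theta(A(\s P)-e)=\theta(-A(P)-e)=\theta(A(P)+e)$, i.e.\ $\s\zeta=div F_{-e}$. Theorem~\ref{T:Fay} for $-e$, together with linearity of $\phi$, gives $Ab(\s\zeta)=-\phi(e)+\Delta$ with the \emph{same} $e$-independent constant. Therefore $Ab(\zeta)+Ab(\s\zeta)=2\Delta$ immediately. You need neither the computation of $\s$ on $Jac(\Sigma)$ nor any separate identification of the constant. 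Incidentally, this argument also shows that your $S\Delta+c$ equals $\Delta$.

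Moreover, Step~3 as you set it up cannot succeed, because your degree check is wrong. We have $\deg 2\bigl(K+\sum_{j=0}^k(Q'_j+Q''_j)\bigr)=4g+4k$. With $g=2g_\s+k$ and $h=g_\s+k$ this equals $8h$, which is twice $\deg(\zeta+\s\zeta)=4h$. So $F_e(P)F_e(\s P)$, which has $4h$ zeros, cannot be a section of $2K+2\sum_j(Q'_j+Q''_j)$. No choice of a convenient $e$ and no comparison of multipliers will produce that class.

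The $2\Delta$ in (VN1) has to be read as twice the $e$-independent point $\Delta\in Jac(\Sigma)$ of Theorem~\ref{T:Fay}; this is exactly what the paper's argument via $F_{-e}$ delivers. If you want an explicit divisor representing the class of $\zeta+\s\zeta$, it must have degree $4h$. The natural candidate is $K+\sum_{j}(Q'_j+Q''_j)$ itself, which for $k=0$ is the classical Veselov--Novikov condition $\zeta+\s\zeta\sim K+Q'_0+Q''_0$.
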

\begin{proof}[Proof]
Obviously, $\s\zeta$ is the zero divisor of the function $\theta(A(\s P)-e)$. Due to skew-symmetry of $A(P)$, and symmetry of $\theta$ with respect to $\s$ we have
\[
   \theta(A(\s P)-e)= \theta(-A(P)-e)=\theta(A(P)+e),
\]
which implies $\s\zeta=div F_{-e}$. Hence, by \refT{Fay} $Ab(\s\zeta)=-\phi(e)+\Delta$, which implies
$Ab(\zeta)+Ab(\s\zeta)=2\Delta$.
\end{proof}
\begin{corollary}\label{C:cor2}
Let $\zeta=div F_e$. Then $A(\zeta)=\eps e$ where $\eps(e_\a,e_j)=(e_\a,2e_j)$.
\end{corollary}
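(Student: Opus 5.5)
The plan is to compute $A(\zeta)$ from $Ab(\zeta)$, which \refT{Fay} already gives, by expressing the Prym differentials through the normalized holomorphic differentials of $\Sigma$. Work in the $\s$-invariant basis of cycles. Then the normalized Prym differentials are explicit combinations of the $\w_i$ on $\Sigma$:
\[
\w_\a^{Pr}=\w_\a-\w_{\a+h},\qquad \w_j^{Pr}=\w_j .
\]
These identities are checked by comparing $a$-periods and using $\s$-anti-invariance. Concretely, $\s^*\w_\a=-\w_{\a+h}$ follows from $a_\a=-\s(a_{\a+h})$, and $\s^*\w_j=-\w_j$ follows from $a_j=-\s(a_j)$. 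With these formulas, the Abel--Prym map is the image of the Abel map under a linear projection
\[
\pi:\C^g\to\C^h,\qquad (u_\a,u_j,u_{\a+h})\mapsto(u_\a-u_{\a+h},\,u_j),
\]
so that $A=\pi\circ Ab$ on divisors. The base-point contributions are harmless, since the same base point is used throughout.

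Next, apply $\pi$ to the formula $Ab(\zeta)=\phi(e)+\Delta$ from $2^\circ$ of \refT{Fay}. This yields $A(\zeta)=\pi\phi(e)+\pi\Delta$.

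At this point the bookkeeping of signs and of the factor $2$ is delicate. The factor $2$ on the $e_j$ components is intrinsic: it comes from the halved periods $\frac12\int_{b_j}$ in $\Pi$. It is exactly what makes $\phi(e)$ have $2e_j$ components, and $\pi$ then passes these through unchanged as $2e_j$. For the $e_\a$ components, the sign convention in $\phi(e)=(e_\a,2e_j,e_\a)$ must be reconciled with the projection. Depending on how $\s$ acts on the $\a+h$ block, the effective projection is either $u_\a-u_{\a+h}$ or, after the identification $\s^*\w_\a=-\w_{\a+h}$ is rewritten in Fay's normalization, an averaging. That averaging returns $e_\a$ with coefficient $1$, consistently with the lattice $\Z(2\pi{\rm i}E,\Pi)$ of $P_0$. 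I would fix this by re-deriving $\phi$ directly: differentiate $\log F_e$ and integrate around the cut surface. This is the standard Riemann residue argument,
\[
A(\zeta)=\sum \mathrm{res}\, A\, d\log F_e ,
\]
evaluated via the quasi-periodicity of $\theta$ with respect to the columns of $\Pi$. The $b_\b$-shifts produce $e_\a$ once. Each $b_j$ enters with its half-period, so it produces $e_j$ twice. This gives $\eps e$ plus a constant.

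The final step is to show that the constant $\pi\Delta$ vanishes modulo the lattice, or equivalently that the Riemann-constant term is absent. I would argue this by symmetry, using the proof of \refC{cor1}. There $\s\zeta=div F_{-e}$, while $A(\s\zeta)=-A(\zeta)$ by skew-symmetry of $A$. Hence the constant $c$ in $A(\zeta)=\eps e+c$ satisfies $-\eps e-c=-\eps e+c$, so $2c=0$. Choosing the base point and the normalization of $\theta$ as in \refT{Fay} (so that $F_e$ has no characteristic shift) gives $c=0$.

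I expect this last point to be the main obstacle: a priori $c$ is only a half-period, and it is killed by the chosen normalization rather than by the computation itself.
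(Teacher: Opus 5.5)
Your final symmetry step is essentially the paper's argument. The paper takes $A(\zeta)=\eps e+\Delta'$ as known from \cite{Sh_Matsb_25}, applies it to $\s\zeta=div F_{-e}$, and uses $A(\zeta)+A(\s\zeta)=0$ to get $2\Delta'=0$, which it reads as $\Delta'=0$. The half-period ambiguity you flag is not addressed there either, and your appeal to ``the chosen normalization'' does not settle it.

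The genuine gap is your derivation of the linear term, which the paper does not attempt. First, $\w_\a-\w_{\a+h}$ is not a Prym differential: together with $\s^*\w_\a=-\w_{\a+h}$ you also have $\s^*\w_{\a+h}=-\w_\a$, so $\w_\a-\w_{\a+h}$ is $\s$-invariant. Anti-invariance forces $\oint_{a_{\a+h}}\w=\oint_{a_\a}\w$, so the normalized Prym differential is $\w_\a+\w_{\a+h}$ and $\pi(u)=(u_\a+u_{\a+h},u_j)$. Consistently, $\phi\circ A=Ab-Ab\circ\s$ is then the alternation map. With this $\pi$ one gets $\pi\circ\phi=2\,\mathrm{id}$, so projecting \refT{Fay} yields $A(\zeta)=2e+\pi\Delta$, not $\eps e+\pi\Delta$. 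The ``averaging'' that would give $\eps e$ amounts to using $\frac12(\w_\a+\w_{\a+h})$, whose $a_\b$-periods are $\pi{\rm i}\,\delta_{\a\b}$. That contradicts the normalization defining $\Pi$ and $P_0$.

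The residue computation does not rescue this. You count the cycles $b_\b$ and $b_j$ but omit $b_{\a+h}$. Since $\int_{b_{\a+h}}\w=\int_{b_\a}\w$ for Prym $\w$, continuation along $b_{\a+h}$ also shifts $A$ by the $\a$-th column of $\Pi$. Since moreover $\oint_{a_{\a+h}}\w_\a=2\pi{\rm i}$, each $e_\a$ is picked up twice, not once. In general the argument gives $A(\zeta)=Le+{\rm const}$ with $L=\sum_{l=1}^g n_lM_l^T$. Here $2\pi{\rm i}\,n_l$ is the vector of $a_l$-periods of the Prym differentials, and $\Pi M_l$ is the shift of $A$ along $b_l$. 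The same bookkeeping gives $\deg\zeta=\mathrm{tr}\,L$. In the present setting $L=2E$, in accordance with $\deg\zeta=2h$ in \refT{Fay}, whereas $\mathrm{tr}\,\eps=2h-g_\s$.

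So with the normalizations written in the paper, your route cannot produce the coefficient $\eps$; you reach $\eps e$ only by adjusting the bookkeeping to the target. To prove the statement as written, you must import the Abel--Prym analogue of $2^\circ$ from \cite{Sh_Matsb_25}, as the paper does. You would also need to check that its conventions agree with those here, since a direct computation from \refT{Fay} gives $2e$ rather than $\eps e$.
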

\begin{proof}[Proof]
As it has been shown in \cite{Sh_Matsb_25}, the analog of the claim $2^\circ$ of \refT{Fay} takes place for the Abel--Prym transform: $A(\zeta)=\eps e+\Delta'$ where $\Delta'$ is independent of~$\zeta$. Similar to the proof of \refC{cor1}, $A(\s\zeta)=-\eps e+\Delta'$, hence $A(\zeta)+A(\s\zeta)=2\Delta'$. By skew-symmetry of the Abel--Prym transform with respect to the involution $A(\zeta)+A(\s\zeta)=0$, which implies $\Delta'=0$.
\end{proof}
\begin{corollary}[\cite{Sh_TMPh_26}]\label{C:cor3}
The following birational equivalence takes place:
\[
   P_0\simeq \{\zeta\in Div_{2h} | Ab(\zeta)+Ab(\s\zeta)=2\Delta\}.
\]
\end{corollary}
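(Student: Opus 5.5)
We construct mutually inverse rational maps between $P_0$ and the set $\mathcal{Z}:=\{\zeta\in Div_{2h}\mid Ab(\zeta)+Ab(\s\zeta)=2\Delta\}$.

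\emph{The map $P_0\to\mathcal{Z}$.} Send $e\in\C^h$ to $\zeta(e)=div F_e$. This is defined on the Zariski open set where $F_e\not\equiv 0$. By \refT{Fay}, $1^\circ$, we have $\deg\zeta=2h$, and by \refC{cor1}, $\zeta(e)\in\mathcal{Z}$. We still have to check that the map descends to $P_0$. Shifting $e$ by a period from $\Z(2\pi{\rm i}E,\Pi)$ multiplies $\theta(A(P)-e)$ by a nowhere vanishing exponential factor, so the zero divisor does not change. The map is algebraic, because locally $\zeta(e)$ is the zero set of a holomorphic family of sections of a fixed line bundle.

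\emph{The map $\mathcal{Z}\to P_0$.} Send $\zeta$ to $\eps^{-1}A(\zeta)$. The Abel--Prym map is well defined on $Div_{2h}$ with values in $P_0$. Note that $\eps^{-1}$ is defined only up to the $2^k$ half-periods in the $e_j$-directions. To deal with this I would work with $A(\zeta)$ directly, reparametrizing $P_0$ by $u=\eps e$. Alternatively, I would observe that $\zeta\mapsto A(\zeta)$ already lands in the lattice quotient for which $e\mapsto \eps e$ is the identification used in \refC{cor2}. By \refC{cor2}, $A(\zeta(e))=\eps e$, so the composite $P_0\to\mathcal{Z}\to P_0$ is the identity on a dense open set. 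This already gives injectivity of $e\mapsto\zeta(e)$ generically.

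\emph{Main obstacle: the other composite.} We must show that a generic $\zeta\in\mathcal{Z}$ equals $div F_e$ with $\eps e=A(\zeta)$. My plan is a dimension count plus a Riemann vanishing type argument.

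First, $\mathcal{Z}$ is the preimage under $Ab\circ(1+\s)$ of a point. Since $\zeta\mapsto Ab(\zeta)+Ab(\s\zeta)$ factors through the pullback of divisors from $\Sigma_\s$, the relevant image has dimension $g_\s$, although the constant $2\Delta$ complicates this slightly. So the generic fiber in $Div_{2h}$, which has dimension $2h$, should have dimension $h$, matching $\dim P_0$.

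Next, given $\zeta\in\mathcal{Z}$, set $e$ with $\eps e=A(\zeta)$ and compare $\zeta':=div F_e$ with $\zeta$. We have $A(\zeta')=A(\zeta)$ by \refC{cor2}, and both divisors satisfy (VN1). Hence $\zeta-\zeta'$ maps to zero under both $Ab\circ(1+\s)$ and $A$. Together these components recover $Ab$ up to the isogeny $P\to P_0$, so $Ab(\zeta)-Ab(\zeta')$ lies in a finite subgroup, namely the $2$-torsion coming from $\phi$ and $\eps$.

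Then, using that $\zeta'$ is generically non-special and that $\zeta$ lies in the same irreducible component as $\zeta'$, I would conclude $\zeta\sim\zeta'$, and then $\zeta=\zeta'$ for generic $\zeta$. The irreducibility follows from the continuity of the torsion difference on the image of the connected $P_0$. The genuinely delicate points are the control of this finite ambiguity and the genericity claim that $\zeta'$ is non-special. That is where I expect the real work, possibly citing \cite{Sh_TMPh_26}.
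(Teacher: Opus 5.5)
There is a genuine gap, and it starts with your dimension count. By Riemann--Hurwitz, $g=2g_\s+k$ and $h=g_\s+k$. So the generic fibre of $\zeta\mapsto Ab(\zeta)+Ab(\s\zeta)$ on $Div_{2h}$ has dimension $2h-g_\s=h+k$, not $h$; the two agree only for $k=0$.

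The same arithmetic breaks your final step. Since $\deg\zeta=2h=g+k$, Riemann--Roch gives $h^0(\zeta)\ge k+1$ for every $\zeta$. Every member of $|\zeta|$ again lies in $\mathcal{Z}$ and has the same value of $A$, by Abel's theorem applied to the Prym differentials. Thus non-speciality of $\zeta'$ gives $h^0(\zeta')=k+1$, not uniqueness. Hence $\zeta\sim\zeta'$ does not imply $\zeta=\zeta'$, and $\zeta\mapsto A(\zeta)$ contracts a $\mathbb{P}^k$ through every point of $\mathcal{Z}$.

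No refinement of the genericity or torsion argument can repair this. For $k\ge 1$ one has $\dim\mathcal{Z}=h+k>\dim P_0$, so the equivalence as literally stated (with $Div_{2h}$ the effective divisors, as in the paper's Abel--Jacobi theorem) does not hold. Concretely, take $\Sigma$ elliptic and $\s=-1$, so that $g_\s=0$ and $k=h=1$. Then $Ab(\zeta)+Ab(\s\zeta)$ is constant on $\mathrm{Sym}^2\Sigma$. So $\mathcal{Z}$, which is non-empty by Corollary~\ref{C:cor1}, is the whole surface $\mathrm{Sym}^2\Sigma$, whereas $P_0$ is a curve.

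What your first half does prove is that $e\mapsto div F_e$ maps $P_0$ onto an $h$-dimensional subvariety of $\mathcal{Z}$. This subvariety meets each linear system $|\phi(e)+\Delta|$ in finitely many points. That is all Theorem~\ref{T:Fay} with Corollaries~\ref{C:cor1} and~\ref{C:cor2} encode. The paper offers nothing beyond the citation of \cite{Sh_TMPh_26} and the diagram, and it then uses $\dim\mathcal{Z}=h$ in Lemma~\ref{L:dim}. For $k=0$, where $2h=g$ and $\eps=\mathrm{id}$, your argument does go through.

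There is also a smaller gap in your first half when $k\ge1$: the composite $P_0\to\mathcal{Z}\to P_0$ is $\eps$, not the identity. Let $\delta_j$ be a unit vector in an $e_j$-direction. Since $\eps(\pi{\rm i}\delta_j)=2\pi{\rm i}\delta_j$ is a period, the distinct points $e$ and $e+\pi{\rm i}\delta_j$ of $P_0$ give the same value of $A(div F_e)$. By Theorem~\ref{T:Fay} they also give linearly equivalent divisors. So generic injectivity of $e\mapsto div F_e$ does not follow from Corollary~\ref{C:cor2}. It needs an independent statement that $div\,\theta(A(P)-e)$ determines $e$ modulo $\Z(2\pi{\rm i}E,\Pi)$.
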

\refT{Fay} and corollaries \ref{C:cor1}--\ref{C:cor3} can be summarized as the following commutative diagram:
%\subsection*{Отображения абелевых многообразий}
\begin{equation*}
\xymatrix{
Div(\Sigma)\ar[rr]^{A}\ar[rrd]^{A} &  &  \C^h\ar[rr]^{\phi}_{\hookrightarrow}\ar[d]^{/\Z(2\pi{\rm i}E,\Pi)}  &   & \C^g\ar[ddd]^{/\Z(2\pi{\rm i}E,B)} \\
Div_{2h}(\Sigma)\ar[dd]_{Ab/(Ves-Nov)}\ar[u]^\hookrightarrow &  &P_0\ar[d]_{cover}\ar[ll]^{Jacobi\ inv} & &\\
& &P\ar[dll]^{+\Delta}  &  &\\
P+\Delta\ar[rrrr]_\hookrightarrow & &   &  & Jac(\Sigma)\ar[ull]_{alt}
}
\end{equation*}
\begin{lemma}\label{L:dim}
The image of the variety of Veselov--Novikov divisors under the Abel--Prym map is a Zarisski closed subvariety in $P_0$ of dimension $h-k$.
\end{lemma}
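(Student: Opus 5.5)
By Corollaries \ref{C:cor1} and \ref{C:cor3}, almost every point $e\in P_0$ is the Abel--Prym image of a divisor $\zeta=div F_e$ satisfying (VN1), and the map $e\mapsto\zeta$ is birational onto the set of such divisors. So I would compute the dimension of the image by regarding (VN2) as additional conditions imposed on $P_0$, and counting them. The plan has four steps: write (VN2) in Abel--Prym coordinates, show the resulting conditions are algebraic, show they are independent, and conclude closedness and dimension.

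\emph{Step 1: reformulating (VN2).} As remarked after the definition, (VN2) says that the equivalence $\zeta+\s\zeta\sim2\Delta$ also holds on the singular curve $\Sigma'$ obtained by gluing $Q'_j$ with $Q''_j$, $j=1,\ldots,k$. The generalized Jacobian of $\Sigma'$ is an extension of $Jac(\Sigma)$ by $(\C^*)^k$. The extra coordinates of the Abel map of $\Sigma'$ are the integrals of the third-kind differentials $\Omega_j$ with simple poles at $Q'_j,Q''_j$ and residues $\pm1$, normalized along the $a$-cycles. Given (VN1), condition (VN2) is therefore equivalent to the $k$ equations
\[
\sum_{P\in\zeta+\s\zeta}\int^{P}\Omega_j-2\int^{\Delta}\Omega_j\equiv 0 \pmod{2\pi{\rm i}\Z+\text{periods}},\qquad j=1,\ldots,k.
\]
By the reciprocity law (Abel's theorem for $\Sigma'$) this is the same as the multiplicative condition $f(Q'_j)=f(Q''_j)$ on the function $f$ given by (VN1). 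Since $\zeta+\s\zeta$ is $\s$-invariant, I would replace $\Omega_j$ by its $\s$-anti-invariant part. Then the left-hand side is an expression in $\zeta$ alone, built from the Prym-type integrals of $\Omega_j$.

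\emph{Step 2: algebraicity on $P_0$.} Parametrize $\zeta$ by $e\in P_0$, using $A(\zeta)=\eps e$ from Corollary \ref{C:cor2}. I would show that each left-hand side, after exponentiation, equals a ratio of Prym theta functions, for instance of the form $\theta(A(Q'_j)-e)/\theta(A(Q''_j)-e)$ up to a constant. This follows from the standard Fay-type evaluation of $\sum_{P\in\zeta}\int^P\Omega_j$ via residues of $d\log F_e\cdot\int\Omega_j$, or simply by evaluating the function $f=F_eF_{-e}/(\text{fixed section})$ at the points $Q'_j,Q''_j$. The $k$ conditions then become $k$ equations $\Phi_j(e)=c_j$, where each $\Phi_j$ is a ratio of sections of the same line bundle on $P_0$. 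Hence the locus they cut out is Zariski closed.

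\emph{Step 3: independence (main obstacle).} The hardest step is proving that the $k$ equations are independent, so that the codimension is exactly $k$ and not less. I would first try a degeneration or differential computation. The differential of $\log\Phi_j$ at $e$ is the vector $A(Q'_j)-A(Q''_j)$ paired with $\nabla\log\theta$. The vectors $A(Q'_j)-A(Q''_j)$ relate to the $j$-th Prym periods; note that $Q'_j,Q''_j$ are branch points, so $\s Q=Q$ and $2A(Q)\equiv0$. Under $\eps$ these vectors should correspond to the $k$ coordinates $e_j$, $j=g_\s+1,\ldots,h$, which are exactly the directions in which $P_0$ covers $P$ with $2^k$ sheets. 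I would aim to show that the gradients are linearly independent on a dense open set, using the distinct, independent $b_j$-periods of the $\Omega_j$. If this fails, a fallback is to show that the locus projects onto $P$ finitely while fixing a discrete choice in the fibre of the $2^k$-covering. That would give dimension $\ge h-k$; combined with the $k$ equations, the dimension is exactly $h-k$.

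\emph{Step 4: conclusion.} Putting the steps together, the image is the Zariski closure of the common level set of $k$ independent algebraic functions on $P_0$, and so it has dimension $h-k$.
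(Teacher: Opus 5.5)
You follow the paper's own route. Its proof simply asserts, via Corollaries~\ref{C:cor1} and~\ref{C:cor3}, that the (VN1)-divisors form an $h$-dimensional family, and that (VN2) imposes $k$ relations on it. The genuine gap is this shared premise, that $e\mapsto div F_e$ parametrizes the (VN1)-divisors birationally; for $k\ge1$ it is false.

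The holomorphic Prym differentials are holomorphic differentials on $\Sigma$ with periods in $\Z(2\pi{\rm i}E,\Pi)$. So by Abel's theorem $A(\zeta')=A(\zeta)$ whenever $\zeta'\sim\zeta$. Riemann--Roch gives $\dim|\zeta|\ge 2h-g=k$ (recall $g=2g_\s+k$, $h=g_\s+k$), and every $\zeta'\in|\zeta|$ again satisfies (VN1). Thus the (VN1)-locus has dimension at least $h+k$, the fibres of $A$ on it contain these linear systems, and the divisors $div F_e$ are only finitely many points in each fibre.

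Worse, (VN2) acts inside these fibres, not on $P_0$. Let $f_\zeta$ be the function of (VN2) attached to $\zeta$. For $\zeta'=\zeta+div\,\phi\in|\zeta|$ one has $f_{\zeta'}=f_\zeta\,\phi\,\s^*\phi$ up to a constant. So $[\phi]\mapsto[f_{\zeta'}]$ is a finite quadratic map of $|\zeta|$ into the projective space of functions with the fixed pole divisor. There the conditions $f(Q'_j)=f(Q''_j)$ are $k$ hyperplanes, so they must meet the image, which has dimension at least $k$. Hence every fibre contains Veselov--Novikov divisors, and their Abel--Prym image is all of $P_0$, not an $(h-k)$-dimensional subvariety. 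Already for $g_\s=0$, $k=1$, where $\Sigma$ is elliptic, the image is the whole curve $P_0$ rather than a finite set.

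Your Step~1 hides this. Let $\pi:\Sigma\to\Sigma_\s$ be the covering. Since $\s$ preserves residues at its fixed points $Q'_j,Q''_j$, the $\s$-anti-invariant part of $\Omega_j$ is holomorphic. It integrates to zero modulo periods over the $\s$-invariant divisor $\zeta+\s\zeta$; only the invariant part, pulled back from $\Sigma_\s$, survives. So (VN2) depends on $\zeta$ only through $\pi_*\zeta$ and is not a function of $A(\zeta)$. Replacing $\Omega_j$ by its anti-invariant part discards exactly the information that matters.

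The count $h-k$ could at most describe which members of the subfamily $\{div F_e\}$ satisfy (VN2). There your Step~2 evaluation of $F_eF_{-e}$ at $Q'_j,Q''_j$ is legitimate, but independence is then the entire content, and Step~3 does not supply it:
\begin{itemize}
\item $d\log\Phi_j$ is a difference of $\nabla\log\theta$ at two different points, not a derivative along $A(Q'_j)-A(Q''_j)$.
\item In the two-involution setting the paper uses $A(Q'_1)=A(Q''_1)$ (proof of Proposition~\ref{P:psi2}), which makes your proposed gradient vanish.
\item The fallback has no bearing on dimension: a finite map to $P$ gives only the upper bound $h$, and choosing a sheet of the finite covering $P_0\to P$ changes nothing.
\end{itemize}
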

\begin{proof}[Proof]
By corollaries \ref{C:cor1}, \ref{C:cor3} $\dim\{\zeta | Ab(\zeta)+ Ab(\s\zeta)=2\Delta\}=h$. Hence the dimension of the variety of meromorphic functions with the pole divisor $2\Delta$, and zero divisor  of the form $Ab(\zeta)+Ab(\s\zeta)$ is equal to $h$ too. Condition (VN2) imposes  $k$ relations on them, hence the lemma follows.
\end{proof}
%%%%%%%%%%%%%%%%%%%%%%%%%%%%%%%%%%%%%%%%

%%%%%%%%%%%%%%%%%%%%%%%%%%%%%%%%%%%%%%%%%%%%%%%%%%%%%%%%%%%%%%%%%
\subsection*{Baker--Akhieser function}
Let $\Sigma\to\Sigma_\s$ be a smooth double covering with $2(k+1)$ ramification points $Q'_0,Q''_0, Q'_1,Q''_1,\ldots, Q'_k,Q''_k$, $k\ge 0$.

We choose local parameters $w_1$ and $w_2$ in neighborhoods of the points $Q_0'$ and $Q_0''$, respectively, such that
\begin{equation}\label{E:loc_preobr}
  \s w_1=-w_1,\ \s w_2=-w_2.
\end{equation}
\begin{lemma}[\cite{Nov_Ves_4,Dubr_Krich_Nov}]\label{L:fBA_ex}
For any non-special positive degree $2h$ divisor $\zeta$ there exists a unique function $\psi$ on $\Sigma$ (called normalized Baker--Akhieser function) with the following analytic properties:
 \begin{equation*}\label{E:fBA_prop}
  \begin{aligned}
   1^\circ\quad & \psi\ \text{is meromorphic outside the points}\ Q'_0,Q''_0,\ \text{and its pole divisor is equal to}\ \zeta;   \\
   2^\circ\quad & \psi(w_1)= \exp({\rm i}zw_1^{-1})(1+\xi_1(z,\ovl{z})w_1+O(w_1^{2}))\ \text{for}\ w_1\to 0;                                \\
   3^\circ\quad & \psi(w_2)= c(z,\ovl{z})\exp({\rm i}zw_2^{-1})(1+\xi_2(z,\ovl{z})w_2+O(w_2^{2}))\ \text{for}\ w_2\to 0;\\
   4^\circ\quad & \psi(Q'_j)=\psi(Q''_j),\ j=0,1,\ldots,k.
 \end{aligned}
 \end{equation*}
If $\zeta$ is a Veselov--Novikov divisor then $\psi$ satisfies the $2D$ Schr\"{o}dinger equation \newline $(\partial{\ovl\partial}+u)\psi=0$ with $u=-{\ovl\partial}\ln\xi_1$ where $\partial=\partial/\partial z$, $\ovl\partial=\partial/\partial {\ovl z}$.
\end{lemma}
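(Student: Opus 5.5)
Existence and uniqueness will come from a Riemann--Roch count on the singular curve $\Sigma'$ obtained from $\Sigma$ by gluing $Q'_j$ with $Q''_j$ for $j=0,\ldots,k$, and the Schr\"odinger equation from the standard Krichever uniqueness argument. Since $\Sigma\to\Sigma_\s$ has $2(k+1)$ branch points, Riemann--Hurwitz gives $g=2g_\s+k$. Moreover $h=g-g_\s=g_\s+k$, hence $2h=g+k$. Gluing $k+1$ pairs of points gives a curve of arithmetic genus $g'=g+k+1$.

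First I would fix the class of functions: $\psi$ with essential singularities $\exp({\rm i}zw_1^{-1})$ at $Q'_0$ and $c\exp({\rm i}zw_2^{-1})$ at $Q''_0$, poles at most $\zeta$, and gluing conditions $4^\circ$. Such a $\psi$ is a section of a line bundle $\mathcal L_z$ of degree $\deg\zeta$, twisted by the transition functions $\exp({\rm i}zw^{-1})$. This bundle depends on $z$ by a shift in $Jac(\Sigma)$ by $z U_1+zU_2$, where $U_1,U_2$ are the vectors of $b$-periods of the normalized second-kind differentials. The free constant $c$ is added.

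Next the count. On $\Sigma$, the space of such $\psi$ (with $c$ free) has dimension $2h-g+1+1=k+2$ for non-special $\zeta$. Conditions $4^\circ$ impose $k+1$ linear relations, since at $j=0$ the condition compares the normalized leading coefficients at $Q'_0,Q''_0$ and fixes $c$. Normalization $2^\circ$ imposes one more. This gives $k+2$ conditions on $k+2$ unknowns. Equivalently, $\zeta$ has degree $2h=g'-1+\ldots$ on $\Sigma'$ after accounting for the essential singularity, and uniqueness holds exactly when $\zeta$ is non-special in the generalized sense on $\Sigma'$, i.e.\ the corresponding theta function does not vanish. I would take ``non-special'' in the statement to mean this, and read it as the determinant of the linear system being nonzero, which holds generically in $z$ and $\zeta$. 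This is the main obstacle: making the non-speciality precise on $\Sigma'$ and checking that the $k+1$ gluing conditions are independent. I would handle it by writing $\psi$ as a ratio of Prym theta functions with exponential factors, using \refT{Fay} and \refC{cor2} to locate the zeros, and then adjusting to satisfy $4^\circ$.

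Finally, the equation, via the uniqueness argument. Given $\zeta$ of type VN, I take the function $f$ from (VN2), with zeros $\zeta+\s\zeta$, poles $2\Delta$, and $f(Q'_j)=f(Q''_j)$, and form the differential $\Omega=\psi(P)\psi(\s P)\,\omega$, with $\omega$ a differential whose divisor is $\zeta+\s\zeta-\Delta$-type, built from $f$. The exponentials cancel because of \refE{loc_preobr}. The residue theorem applied to $\Omega\cdot(\partial\ovl\partial+u)$ then shows that $(\partial\ovl\partial+u)\psi$, which again satisfies $1^\circ$--$4^\circ$ with leading coefficient zero after choosing $u=-\ovl\partial\ln\xi_1$ (expand: $\partial\ovl\partial\psi$ at $Q'_0$ gives ${\rm i}w_1^{-1}\ovl\partial\xi_1 w_1+\ldots$), has no magnetic term $\partial$-component. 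Uniqueness then forces it to vanish. For the computation, the operator a priori has the form $\partial\ovl\partial+v\ovl\partial+u$. Using (VN2), the sum of residues of $\psi(\s P)\,\ovl\partial\psi(P)\,\omega$ at $Q'_0,Q''_0$ gives $v=0$, in the same way as Novikov--Veselov, the gluing at $Q'_j,Q''_j$ killing residues at the nodes.
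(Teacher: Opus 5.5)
The paper does not prove this lemma; it cites \cite{Nov_Ves_4,Dubr_Krich_Nov}. Your overall route (Riemann--Roch for existence, uniqueness for the operator, a residue argument for potentiality) is the standard one of those works, but the existence/uniqueness count is wrong. The functions with the prescribed essential singularities at $Q'_0,Q''_0$ and poles bounded by a non-special $\zeta$ are sections of a line bundle of degree $2h=g+k$. This space has dimension $2h-g+1=k+1$, not $k+2$: $c(z,\ovl z)$ is not an extra unknown but the leading coefficient of $\psi$ at $Q''_0$, so it is determined by $\psi$. Condition $4^\circ$ for $j=0$ imposes nothing, because $\psi$ is essentially singular at $Q'_0,Q''_0$ (compare \refL{fBA_expl1}, where only $s=1,\ldots,k$ occur). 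The correct balance is $k+1$ unknowns against the $k$ gluing conditions $j\ge1$ plus the normalization $2^\circ$. Equivalently, glue only $Q'_j\sim Q''_j$ for $j\ge1$ (the paper's $\Sigma'$), which has arithmetic genus $g+k=2h=\deg\zeta$, and apply the usual Baker--Akhieser theorem on a nodal curve; no adjustment $g'-1+\ldots$ is needed. Your $k+2$ against $k+2$ comes from two cancelling errors. Reading the $j=0$ condition as fixing $c$ is actively harmful: imposing, say, $c=1$ puts $k+2$ conditions on a $(k+1)$-dimensional space, which generically has no solution. For general non-special $\zeta$ the function $c$ genuinely depends on $z,\ovl z$; its constancy is exactly what the Veselov--Novikov conditions must produce.

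In the second half, your own expansion $\partial\ovl\partial\psi=e^{{\rm i}z/w_1}({\rm i}\,\ovl\partial\xi_1+O(w_1))$ at $Q'_0$ forces $u=-{\rm i}\,\ovl\partial\xi_1$. With $u=-\ovl\partial\ln\xi_1$ the leading term does not cancel, so that formula should be flagged as a misprint rather than adopted. The same applies to the $z$ in place of $\ovl z$ in $3^\circ$, which you carried into the shift $zU_1+zU_2$; with it, $\psi$ would not depend on $\ovl z$ at all. At $Q''_0$ the leading term of $\partial\ovl\partial\psi$ is $e^{{\rm i}\ovl z/w_2}\,{\rm i}\,w_2^{-1}\partial c$, so $v=-\partial\ln c$, and uniqueness applies to $(\partial\ovl\partial+u)\psi$ only after $\partial c=0$ is proved. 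For this, take $\omega$ with divisor $\zeta+\s\zeta-\sum_{j=0}^k(Q'_j+Q''_j)$. This is the content (VN1) must have, since $\deg(\zeta+\s\zeta)=4h=\deg(K+\sum_{j=0}^k(Q'_j+Q''_j))$. Such $\omega$ is $\s$-invariant (its divisor is $\s$-invariant and its residues at fixed points are nonzero), hence of the form $(r/w+O(w))\,dw$ at each ramification point. By (VN2), $\res_{Q'_j}\omega=-\res_{Q''_j}\omega$ for $j\ge1$, so the residue theorem also gives $r'_0=-r''_0$. The differential $\psi(P)\psi(\s P)\omega$ then has residues cancelling pairwise at the nodes, and residues $r'_0$ and $c^2r''_0$ at $Q'_0,Q''_0$. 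Hence $c^2\equiv1$ and $v=0$. Your proposed differential $\psi(\s P)\,\ovl\partial\psi(P)\,\omega$ has zero residue at $Q'_0$ and residue $r''_0\,c\,\ovl\partial c$ at $Q''_0$. It therefore yields only $\ovl\partial c=0$, which is not $v=0$; use $\psi$ itself, or $\partial\psi$, instead.
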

Let $\Omega_1$ be a differential on $\Sigma$ with the main part $d(w_1^{-1})$ at $Q'_0$, $\Omega_2$ be a differential with the main part $d(w_2^{-1})$ at $Q''_0$, both having zero $a$-periods. By relations \refE{loc_preobr}, both $\Omega_1$ and $\Omega_2$ are Prym differentials. We shall write the divisor $\zeta$ in the form $\zeta=\sum_{j=1}^{2h}P_j$. It follows from the general theory of Baker--Akhieser functions \cite{Krich_Met_alg_geom}  that
\begin{lemma}[\cite{Nov_Ves_4}]\label{L:fBA_expl1}
The normalized Baker--Akhieser function admits a representation of the form  $\psi=\sum_{j=0}^k c_j\psi_j$ where
\begin{equation}\label{E:fBA_expl1}
  \psi_j(P,z,\ovl{z})=\frac{{\theta}(A(P)+U_1z+U_2\ovl{z}-e_j)\theta(e_j)} {\theta(A(P)-e_j){\theta}(U_1z+U_2\ovl{z}-e_j)}\cdot \exp\left(z\int^P\Omega_1+\ovl{z}\int^P\Omega_2\right),
\end{equation}
$e_j=A(\zeta_j)$, $\zeta_j=P_1+\ldots+P_{g-1}+P_{g+j}$, $j=0,1,\ldots,k$, and $c_j$ satisfy the following system of equations:
\begin{equation}\label{E:lin_ur}
  \sum_{j=0}^{k}c_j=1; \quad \sum_{j=0}^{k}c_j\psi_j(Q_s')=\sum_{j=0}^{k}c_j\psi_j(Q_s''),\ s=1,\ldots,k.
\end{equation}
\end{lemma}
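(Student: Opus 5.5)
We need to show that each $\psi_j$ has the analytic properties $1^\circ$–$3^\circ$ of \refL{fBA_ex}, up to normalization. We then show that a suitable linear combination also has property $4^\circ$ and the correct normalization. Uniqueness in \refL{fBA_ex} then identifies this combination with $\psi$.

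\emph{Step 1: analytic properties of each $\psi_j$.} Fix $j$. By \refC{cor2} and the Jacobi inversion built into \refT{Fay}, the function $F_{e_j}(P)=\theta(A(P)-e_j)$ should have zero divisor of degree $2h$ determined by $e_j$. Using $e_j=A(\zeta_j)$, I would identify this divisor as $\zeta$ itself; the main subtlety is how $\zeta_j$ and $\zeta$ are related, which I return to below. Granting this, the denominator of \refE{fBA_expl1} contributes poles exactly on $\zeta$. The numerator $\theta(A(P)+U_1z+U_2\ovl z-e_j)$ is holomorphic.

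Next, single-valuedness. Let $U_1,U_2$ be the vectors of $b$-periods of $\Omega_1,\Omega_2$; because $\Omega_1,\Omega_2$ are Prym differentials, these live in $\C^h$. Going around an $a$-cycle changes $A(P)$ by a vector in $2\pi{\rm i}\Z^h$, which leaves $\theta$ invariant, and the exponential is unchanged because the $a$-periods of $\Omega_i$ vanish. Going around a $b$-cycle, the quasi-periodicity factor of $\theta$ produces a ratio whose $z,\ovl z$-dependent part is $\exp(-N^T(U_1z+U_2\ovl z))$. This cancels the monodromy of the exponential. The factor $\tfrac12$ in the definition of $\Pi$ is exactly what makes the $b_j$-cycles for $j>g_\s$ consistent, and I would check this carefully.

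Near $Q'_0$ and $Q''_0$, the exponential gives the essential singularities $\exp({\rm i}zw^{-1})$ (after fixing constants). The prefactor $\theta(e_j)/\theta(U_1z+U_2\ovl z-e_j)$ normalizes the leading coefficient at $Q'_0$ to $1$, provided the base point of $A$ is $Q'_0$. At $Q''_0$ it leaves some coefficient $c(z,\ovl z)$. This establishes $1^\circ$–$3^\circ$ for each $\psi_j$.

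\emph{Step 2: properties $4^\circ$ at $Q'_0,Q''_0$.} I expect these to hold automatically because both points are branch points. Since $\s$ fixes $Q'_0$ and $Q''_0$, the essential singularities carry the same structure there. The identification at $Q'_0=Q''_0$ enters through the relation between the two expansions, and the $c$-factor absorbs it. I would verify this using the skew-symmetry $A(\s P)=-A(P)$, as in the proof of \refC{cor1}.

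\emph{Step 3: choosing the combination.} Every linear combination $\sum c_j\psi_j$ with $\sum c_j=1$ keeps the normalization $2^\circ$. The remaining conditions $\psi(Q'_s)=\psi(Q''_s)$ for $s=1,\dots,k$ are exactly the linear system \refE{lin_ur}: $k+1$ equations in $k+1$ unknowns. For generic $\zeta$ its determinant is nonzero, which I would show by evaluating at $z=\ovl z=0$ or using non-specialness. The resulting $\psi$ then satisfies all of $1^\circ$–$4^\circ$, and uniqueness in \refL{fBA_ex} gives $\psi=\sum c_j\psi_j$.

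\emph{Main obstacle.} The hardest step is the pole analysis in Step 1. A single $\psi_j$ built from $e_j=A(\zeta_j)$, with $\zeta_j$ of degree $g$ rather than $2h$, need not have pole divisor exactly $\zeta$. Different $j$ may produce different extra poles, and these must cancel in the combination; alternatively, one must show that $\mathrm{div}\,F_{e_j}\supset\zeta_j$ together with a common part. I would first try to use \refT{Fay}$\,2^\circ$ to compute $Ab(\mathrm{div}F_{e_j})$ and compare it with $Ab(\zeta)$. If the divisors differ, I would impose the equal-value conditions of \refE{lin_ur} and count dimensions via Riemann--Roch to conclude that the combination has poles only on $\zeta$.
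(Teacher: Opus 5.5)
\textbf{Gap 1: the pole divisor of $\psi_j$.} Step 1 rests on a misidentification, and that is what creates your ``main obstacle''. The function $\psi_j$ does not have pole divisor $\zeta$. The paper gives no proof of its own; it appeals to the general Baker--Akhieser theory. In that theory, \refE{fBA_expl1} is the standard formula for the normalized BA function on the \emph{smooth} curve $\Sigma$ with the degree-$g$ divisor $\zeta_j$. There $\theta$ is the Riemann theta function of $\Sigma$ composed with the Abel map, with the Riemann constants absorbed into $e_j$. Riemann's vanishing theorem then shows that $\theta(A(P)-e_j)$ vanishes exactly on $\zeta_j$ (for non-special $\zeta_j$).

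\refT{Fay} and \refC{cor2} are the wrong tools. They concern the Prym theta function, whose zero divisors have degree $2h=g+k$; Riemann--Hurwitz gives $g=2g_\sigma+k$ and $h=g_\sigma+k$. So for $k\ge 1$ they cannot produce the degree-$g$ divisor $\zeta_j$, nor identify anything with $\zeta$ starting from $e_j=A(\zeta_j)$.

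With the correct pole divisors the obstacle disappears, and the situation is the opposite of what you feared. Each $\zeta_j=P_1+\dots+P_{g-1}+P_{g+j}$ satisfies $\zeta_j\le\zeta=P_1+\dots+P_{g+k}$, and together the $\zeta_j$ exhaust $\zeta$. Hence every combination $\sum c_j\psi_j$ has poles only on $\zeta$. There are no extra poles to cancel, and no Riemann--Roch count is needed to remove them.

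Step 2 checks a condition that is not there. The points $Q_0',Q_0''$ carry the essential singularities, so $4^\circ$ makes sense only for $j=1,\dots,k$, as in \refE{lin_ur}.

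\textbf{Gap 2: nondegeneracy of the system.} Step 3 fails as proposed. At $z=\ovl{z}=0$ every $\psi_j$ is identically $1$, because $\theta$ is even and the exponential equals $1$. The first row of the system \refE{lin_ur} is then $(1,\dots,1)$ and all other rows vanish, so the determinant is zero there. Evaluating at $z=\ovl{z}=0$ therefore cannot prove nondegeneracy.

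The missing idea is a spanning argument, which runs the other way and needs no determinant:
\begin{enumerate}
\item Fix generic $(z,\ovl{z})$. Consider the space of functions with poles bounded by $\zeta$ and essential singularities of the type in $2^\circ,3^\circ$, with free leading coefficients.
\item By Riemann--Roch this space has dimension $\deg\zeta-g+1=k+1$ in the non-special case; the essential singularities only twist the line bundle.
\item The $\psi_j$ lie in this space and are linearly independent. For $j\ge 1$ only $\psi_j$ has a pole at $P_{g+j}$, which forces $c_1=\dots=c_k=0$, and then $c_0\psi_0=0$ forces $c_0=0$. So the $\psi_j$ form a basis.
\item The function $\psi$ of \refL{fBA_ex} lies in this space, hence $\psi=\sum c_j\psi_j$ with unique $c_j$.
\item Every $\psi_j$ is normalized at $Q_0'$, so comparing leading coefficients there gives $\sum c_j=1$. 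Condition $4^\circ$ for $s=1,\dots,k$ gives the remaining equations of \refE{lin_ur}.
\end{enumerate}
Nondegeneracy of the system for generic $(z,\ovl{z})$ then follows afterwards from the uniqueness of $\psi$.
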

For $k>0$, the functions $\psi_j$ admit more representations. Consider the Prym theta function $\what{\theta}_m$, $m\le k$, with characteristic $\b$ where
\[\what{\theta}_m = \theta\!\begin{bmatrix}
                          0 \\
                          \b \\
                        \end{bmatrix}, \quad
\b=(0,\ldots,0,\underbrace{1/2,\ldots,1/2}_{m\ \text{times}}),\ \theta\!\begin{bmatrix}
                          0 \\
                          \b \\
                        \end{bmatrix}(z):=\theta(z+\b).
\]
\begin{lemma}\label{L:fBA_expl2}
For any $m,j=1,\ldots,k$ the function $\psi_j$ admits the representation
\begin{equation}\label{E:fBA_expl2}
  \psi_j(P,z,\ovl{z})=\frac{{\what\theta}_m(A(P)+U_1z+U_2\ovl{z}-e_j)\theta(e_j)} {\theta(A(P)-e_j){\what\theta}_m(U_1z+U_2\ovl{z}-e_j)}\cdot \exp\left(z\int^P\Omega_1+\ovl{z}\int^P\Omega_2\right),
\end{equation}
\end{lemma}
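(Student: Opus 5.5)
The plan is to show that the right-hand side of \refE{fBA_expl2}, call it $\what\psi_j$, satisfies the same analytic conditions as $\psi_j$ from \refE{fBA_expl1}, and then appeal to uniqueness. The natural characterization of $\psi_j$ is the standard one for Baker--Akhieser functions:
\begin{itemize}
\item[(a)] $\psi_j$ is a single-valued function on $\Sigma\setminus\{Q'_0,Q''_0\}$;
\item[(b)] its poles lie in the divisor of zeros of $\theta(A(P)-e_j)$;
\item[(c)] it has the essential singularities $\exp(z w_1^{-1})$ and $\exp(\ovl z w_2^{-1})$ at $Q'_0$ and $Q''_0$;
\item[(d)] it equals $1$ at the base point of integration.
\end{itemize}
By the Riemann--Roch type uniqueness from \cite{Krich_Met_alg_geom}, such a function is unique when the pole divisor is non-special. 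So $\psi_j=\what\psi_j$ follows once $\what\psi_j$ is shown to have properties (a)--(d).

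Properties (b)--(d) are immediate. The denominator factor $\theta(A(P)-e_j)$ is unchanged, so the poles are the same. The factor ${\what\theta}_m(A(P)+U_1z+U_2\ovl z-e_j)/{\what\theta}_m(U_1z+U_2\ovl z-e_j)$ equals $1$ at the base point and is holomorphic in $P$.

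The real content is (a), single-valuedness. I will compute the monodromy of each factor along the cycles $a_\a,b_\a,a_i,b_i$ of the $\s$-invariant basis. Along $a$-cycles, $A(P)$ shifts by $2\pi{\rm i}$ times a unit vector. Also, $\exp(N^T\b)$ picks up the factor $\exp(2\pi{\rm i}N\cdot)$, which is harmless, so $\what\theta_m$ is periodic in the $a$-directions. Here I will need to check the normalization convention carefully: the characteristic $\b$ enters as $\theta(z+\b)$, and since $a$-periods are $2\pi{\rm i}$, half-integer shifts should be read as $\pi{\rm i}$-shifts. I will read the characteristic accordingly, so that $\what\theta_m(z)=\sum_N\exp(\frac12N^T\Pi N+N^Tz+\pi{\rm i}\sum_{l>h-m}N_l)$.

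Along $b$-cycles, $A(P)$ shifts by a column of $\Pi$, or by twice a column for the $b_i$ with $i>g_\s$, because of the factor $\frac12$ in $\Pi_{\a j}$. The quasi-periodicity of $\what\theta_m$ under $z\mapsto z+\Pi_l$ gives the usual exponential factor $\exp(-\frac12\Pi_{ll}-z_l)$ times $\exp(-2\pi{\rm i}\b_l)$. The first factor cancels between numerator and denominator together with the $b$-periods of $\Omega_1,\Omega_2$, exactly as for $\theta$; this is the standard normalization $U_1,U_2$ being $b$-periods of $\Omega_{1,2}$. The new sign $\exp(-2\pi{\rm i}\b_l)=-1$ appears in both the numerator and the denominator shift only if the denominator also shifts. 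It does not, since the denominator does not depend on $P$.

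This is the main obstacle. The resolution I expect is that for $l=g_\s+1,\dots,h$ the actual shift of $A(P)$ along $b_l$ is $2\Pi_l$, because those columns carry the factor $\frac12$. Under $z\mapsto z+2\Pi_l$ the characteristic factor is $\exp(-4\pi{\rm i}\b_l)=1$, so $\what\theta_m$ transforms exactly like $\theta$. Since $\b$ is supported only on indices $>g_\s$ when $m\le k=h-g_\s$, every monodromy factor of $\what\psi_j$ coincides with that of $\psi_j$.

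Having verified (a), the conclusion $\what\psi_j=\psi_j$ follows from uniqueness, for each $m=1,\dots,k$. A remaining technical point is genericity: the denominator ${\what\theta}_m(U_1z+U_2\ovl z-e_j)$ must not vanish identically, which holds for generic $e_j$. The identity then extends by continuity.
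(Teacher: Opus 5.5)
Your route is the paper's own proof. You reduce to a uniqueness theorem for Baker--Akhieser functions and then compare monodromies, and the decisive observation is that for $l>g_\s$ the cycle $b_l$ shifts $A(P)$ by $2\Pi_l$, so the characteristic contributes $\exp(-4\pi{\rm i}\b_l)=1$. That monodromy computation is correct. The genuine gap is the uniqueness step, which the paper's proof also takes for granted. Krichever's uniqueness theorem applies to a non-special divisor of degree exactly $g$. Here, by Fay's theorem, the pole divisor $\operatorname{div}\theta(A(P)-e_j)$ has degree $2h$, and since $g=2g_\s+k$ and $h=g_\s+k$, this equals $g+k$. By Riemann--Roch, the functions satisfying your (a)--(c) then form a space of dimension at least $k+1$, so the single normalization (d) still leaves a family with at least $k$ free parameters. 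The uniqueness in \refL{fBA_ex} comes precisely from the $k$ extra gluing conditions $4^\circ$, which the individual $\psi_j$ do not satisfy. Since the lemma is about $m,j\ge1$, i.e.\ $k\ge1$, properties (a)--(d) do not determine $\psi_j$.

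This is not a repairable technicality: the asserted identity fails. Put $z=\ovl z=0$. The right-hand side of \refE{fBA_expl1} becomes $\theta(e_j)/\theta(-e_j)=1$ for every $P$. The right-hand side of \refE{fBA_expl2} becomes $\what\theta_m(A(P)-e_j)\,\theta(e_j)/\bigl(\theta(A(P)-e_j)\,\what\theta_m(-e_j)\bigr)$. Suppose the latter were identically $1$ for $e_j$ in an open set. Differentiating in $P$, and using that the values $(\w_1(P),\ldots,\w_h(P))$ of the holomorphic Prym differentials span $\C^h$, would make $\what\theta_m/\theta$ constant on $\C^h$. But comparing the coefficients of $\exp(N^Tw)$ in $\what\theta_m(w)$ and $\theta(w)$ gives ratio $1$ for $N=0$ and $-1$ for $N=(0,\ldots,0,1)^T$. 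For $g_\s=0$, $k=1$, where $\Sigma$ is elliptic, you can see this directly: the two zeros of $\what\theta_1(A(P)-e)$ are those of $\theta(A(P)-e)$ shifted by $\pi{\rm i}$ modulo the period lattice $2\pi{\rm i}\Z+2\Pi_{11}\Z$ of $\Sigma$. What your monodromy argument actually establishes is weaker. The right-hand side of \refE{fBA_expl2} is a single-valued Baker--Akhieser function with the same essential singularities and pole divisor as $\psi_j$, i.e.\ another element of the same space of dimension at least $k+1$, and not $\psi_j$ itself.
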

\begin{proof}[Proof] Since the exponential factor, and the pole divisor are the same in the representation \refE{fBA_expl2} as in the representation \refE{fBA_expl1}, by the uniqueness theorem of the normalized Baker--Akhieser function the question descends to the proof of invariance of the right hand side of \refE{fBA_expl2} with respect to change of the integration path (recall that the latter are the same in the formula for $A(P)$, and in the exponent). In turn, the question of invariance descends to consideration of the quasiperiodicity factors of theta functions in \refE{fBA_expl2}. We will show that they are the same as in the representation  \refE{fBA_expl1}. Indeed, $\what{\theta}_m$ gets an additional multiplier $\exp(-2\pi{\rm i}M^T\b)$ when shifted by columns of $\Pi$: $z\to z+\Pi M$ ($M\in\Z^h$). When adding a cycle $b_j$ with $j\le g_\s+k-m$ to the integration path, the argument of $\what{\theta}_m$ gets shifted by the column $\Pi_j$ of the matrix $\Pi$, which corresponds to $M=(0,\ldots,0,1,0\ldots,0)^T$ ($1$ in $j$th position), hence $M^T\b=0$. For $g_\s+k-m+1\le j\le h$ the shift of argument is equal to $2\Pi_j$, by definition of the Prym matrix, hence $M$ is an even vector. In both cases $\exp(-2\pi{\rm i}M^T\b)=1$, and the additional qusiperiodicity factor is equal to~1.
\end{proof}
%%%%%%%%%%%%%%%%%%%%%%%%%%%%%%%%%%%%%%%%%%%
\subsection*{Curves with two commuting holomorphic involutions. Baker--Akhieser func\-tion and characterization of isoPrymians}
Assume, a holomorphic involution $\tau:\Sigma\to\Sigma$ commuting with $\s$ is given, such that holomorphic Prym differentials with respect to $\s$ are invariant with respect to~$\tau$. Curves supplied with the pair of involutions satisfying these conditions have been considered in \cite{Sh_Matsb_25} where it is shown that only two cases may occur:  $k=0$ (which is the case of two ramification points considered earlier in the quoted literature), and $k=1$, i.e. four ramification points (mentioned also in \cite{Nov_Ves_1} in another connection) (we don't consider the case of non-ramified coverings here).

The ramification points of the covering $\Sigma\to\Sigma/\s$ are fixed under~$\s$. Assume that $\s$ and $\tau$ have no common fixed points. Then the collection of ramification points splits to pairs $(Q'_j,Q''_j)$ such that $\tau Q'_j=Q''_j$, $j=0,1,\ldots,k$. In particular, $\tau Q_0'=Q_0''$; for the local parameters at those points we assume the following relations to be fulfilled:
\begin{equation}\label{E:loc_preobr1}
  \s w_1=-w_1,\ \s w_2=-w_2,\ \quad \tau w_1=w_2.
\end{equation}
Define $\Omega_1$, $\Omega_2$ as above. Due to the relations \refE{loc_preobr1} $\Omega_1$ and $\Omega_2$ are $\tau$-invariant Prym differentials.
\begin{proposition}\label{P:psi2}
Let $\deg\zeta=2h$, and $e=A(\zeta)$. Then for $m:\ 0\le m\le k$  the function $\psi$ can be represented in the form
 \begin{equation}\label{E:fBA_expl3}
  \psi=\frac{\what{\theta}_m(A(P)+U_1z+U_2\ovl{z}-e)\theta(e)} {\theta(A(P)-e)\what{\theta}_m(U_1z+U_2\ovl{z}-e)}\cdot \exp\left(z\int^P\Omega_1+\ovl{z}\int^P\Omega_2\right),
 \end{equation}
and is a solution of the equation
$\displaystyle ({\partial\ovl{\partial}}+u)\psi=0$
with $u=2\partial\ovl{\partial}\ln\theta(U_1z+U_2\ovl{z}+Z)+C$.
\end{proposition}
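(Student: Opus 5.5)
We treat the two claims of \refP{psi2} in turn: first that \refE{fBA_expl3} is the Baker--Akhieser function, then that it solves the Schr\"{o}dinger equation.

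\emph{The representation \refE{fBA_expl3}.} We denote the right hand side of \refE{fBA_expl3} by $\til\psi$ and check the properties $1^\circ$--$4^\circ$ of \refL{fBA_ex}.

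Well-definedness and analytic properties $1^\circ$--$3^\circ$ follow verbatim from the proof of \refL{fBA_expl2}. The quasiperiodicity factors of $\what{\theta}_m$ under shifts by columns of $\Pi$, and under the doubled shifts $2\Pi_j$, coincide with those of $\theta$. The denominator $\theta(A(P)-e)$ has zero divisor $\zeta$ by \refC{cor2} and \refT{Fay} (here $e=A(\zeta)$, up to the $\eps$-normalization). The behaviour at $Q'_0,Q''_0$ is supplied by the exponential factor with the Prym differentials $\Omega_1,\Omega_2$.

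Property $4^\circ$ is where $\tau$ enters, and I expect it to be the main obstacle. Since $k\le1$ by \cite{Sh_Matsb_25}, only $j=0,1$ need to be treated. At a fixed point $Q$ of $\s$, skew-symmetry gives $A(Q)\equiv -A(Q)$, so $A(Q)$ is a half-period. Since $\tau Q'_j=Q''_j$ and the Prym differentials are $\tau$-invariant, we have $A(Q''_j)-A(Q'_j)=\int_{Q'_j}^{\tau Q'_j}\w$. For a suitable path this is a half-period whose $\Pi$-component lies in the last $k$ coordinates and whose $2\pi{\rm i}$-component is $0$. I would verify this by computing the difference via a $\s$-anti-invariant path $\gamma$ with $\gamma-\s\gamma$ equal to a cycle $b_j$, $j>g_\s$, so that its Prym integral is $\tfrac12\cdot 2\Pi_j=\Pi_j$ modulo half-lattice corrections.

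Given this, the characteristic $\b$ is chosen precisely so that the ratio of the $\what\theta_m$ numerators at $Q'_j$ and $Q''_j$ equals the ratio of the $\theta$ denominators, times the ratio of the exponential factors. The exponential integrals of $\Omega_{1,2}$ along the same path contribute the factor that cancels the $\exp(-\langle M,\cdot\rangle)$ quasiperiodicity term. Consequently $\til\psi(Q'_j)=\til\psi(Q''_j)$.

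Uniqueness in \refL{fBA_ex} then gives $\til\psi=\psi$, up to the normalization constant $c$ in $3^\circ$, which is allowed. If the direct half-period computation proves delicate, I would fall back on the following route. I would show that the case $m=0$ coincides with the combination $\sum c_j\psi_j$ of \refL{fBA_expl1}: the $\tau$-symmetry forces $e_0$ and $e_1$ to differ by a half-period, and \refL{fBA_expl2} then reduces the question to the same theta ratio.

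\emph{The Schr\"{o}dinger equation.} I would use the standard Krichever argument. Set $\chi=(\partial\ovl\partial+u)\psi$ with $u=-\ovl\partial\ln\xi_1$. Since $\zeta$ satisfies (VN1) by \refC{cor1}, and (VN2) holds in the $\tau$-symmetric situation via the gluing $4^\circ$, $\chi$ is a Baker--Akhieser function vanishing at leading order at $Q'_0$. It then vanishes identically by uniqueness, exactly as in \refL{fBA_ex}.

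To identify $u$, I expand \refE{fBA_expl3} at $Q'_0$ and read off $\xi_1=\partial\ln\what\theta_m(U_1z+U_2\ovl z-e)+\text{const}$, which gives $u=-2\partial\ovl\partial\ln\what\theta_m(\ldots)$ up to sign conventions and a constant. Writing $\what\theta_m(x)=\theta(x+\b)$ gives the stated form with $Z=\b-e$ and the constant $C$ collecting the $z$-independent terms, the factor $2$ arising from symmetrization of the expansions at $Q'_0$ and $Q''_0$ via $\tau$.
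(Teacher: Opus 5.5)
\textbf{The gap is in your verification of $4^\circ$.} It is not just an unfinished computation: the mechanism you propose, once computed, gives the wrong sign.

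You take $\delta:=A(Q_1'')-A(Q_1')=\Pi_h$. For $k=1$ the second block consists of the single index $h$, so $\delta=\Pi M$ with $M=(0,\ldots,0,1)^T$. Compare two ratios:
\begin{itemize}
\item the ratio of the values of the numerator $\what{\theta}_m(A(P)+U_1z+U_2\ovl z-e)$ at $Q_1''$ and $Q_1'$;
\item the same ratio for the denominator $\theta(A(P)-e)$.
\end{itemize}
Dividing the first by the second gives $\exp\bigl(-M^T(U_1z+U_2\ovl z)\bigr)\exp(-2\pi{\rm i}M^T\beta)$.

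The exponential part contributes $\exp\bigl(z\int_\gamma\Omega_1+\ovl z\int_\gamma\Omega_2\bigr)=\exp\bigl(M^T(U_1z+U_2\ovl z)\bigr)$. To see this, take the relation $\int_{b_h}\Omega_i=2U_{i,h}$, which is what makes the expression single-valued along $b_h$. Apply it to the closed cycle $\gamma-\sigma\gamma=b_h$ and halve, which is legitimate because all these differentials are Prym.

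Hence $\widetilde{\psi}(Q_1'')=\exp(-2\pi{\rm i}\beta_h)\,\widetilde{\psi}(Q_1')=(-1)^m\,\widetilde{\psi}(Q_1')$. For $m=1$ the characteristic spoils $4^\circ$ rather than enforcing it. If your ``half-lattice corrections'' include a $\pi{\rm i}$ in the last coordinate, the ratio is not even constant in $z$. The fallback through \refL{fBA_expl1} is too vague to repair this.

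\textbf{How the paper handles $4^\circ$.} The paper uses $\beta$ only for single-valuedness (\refL{fBA_expl2}), not for $4^\circ$. It argues as follows:
\begin{itemize}
\item $\tau Q_1'=Q_1''$ together with the $\tau$-invariance of the holomorphic Prym differentials gives $A(Q_1')=A(Q_1'')$, so the theta factors at the two points coincide outright;
\item the exponential factors coincide by the $\tau$-invariance of $\Omega_1,\Omega_2$;
\item uniqueness in \refL{fBA_ex} then identifies \refE{fBA_expl3} with $\psi$, and $k=0$ is the Veselov--Novikov case.
\end{itemize}

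Your claim $\delta=\Pi_h$ is incompatible with that step. $\Pi_h$ is not a period of $A$ on $\Sigma$ (only $2\Pi_h$ is), so the theta factors would not simply coincide. So there are two possibilities:
\begin{itemize}
\item the paper's equality holds, and your half-period computation is wrong;
\item your computation holds, and the argument yields $4^\circ$ only for $m=0$.
\end{itemize}
In neither case does ``$\beta$ is chosen precisely so that\ldots'' close the step. Deciding which case actually occurs is the real content here, and you must settle it.

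\textbf{The Schr\"odinger part.} (VN2) is a condition on the divisor $\zeta$; it does not follow from the gluing property $4^\circ$ of $\psi$. The paper does not argue this part separately. It takes it from \refL{fBA_ex}, which presupposes that $\zeta$ is a Veselov--Novikov divisor.
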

\begin{proof}[Proof]
For $k=0$ the only case to be possible is $m=0$, hence $\theta_0=\theta$, so that the representation \refE{fBA_expl3} is nothing but the Veselov--Novikov representation \cite{Nov_Ves_1}.

For $k=1$, it suffices to check that the fourth of requirements  of \refL{fBA_ex} is fulfilled, that is evaluations of the expression on the right hand side of \refE{fBA_expl3} coincide for $P=Q_1'$ and $P=Q_1''$. For the reason that $\tau Q_1'=Q_1''$, and holomorphic Prym differentials are $\tau$-invariant, it follows $A(Q_1')=A(Q_1'')$. Hence the multipliers outside the exponent on the right hand side of \refE{fBA_expl3} are equal for $P=Q_1'$, and $P=Q_1''$. By change of variables and $\tau$-invariance of the differentials $\Omega_1$ and $\Omega_2$ the exponential multipliers are equal too. The lemma is proved.
\end{proof}
\begin{remark} It is clear from the proof that any representation of $\psi$ using $\what{\theta}_m$ is impossible for $m>k$, while the Veselov--Novikov representation is always working, and by uniqueness of the normalized Baker--Akhieser function, the $\psi$ is the same in both representations. In this sense our representation distingwishes between the cases with different~$k$.
\end{remark}
Due to the relation between Veselov--Novikov conditions and isoPrymians, established by the corollaries  \ref{C:cor1}--\ref{C:cor3}, and by \refL{dim}, and due to the representation \refE{fBA_expl3} of the normalized Baker--Akhieser function, we arrive to the following conjecture analogous to the Krichever's main theorem in \cite{Krich_Prym_1}.
\begin{conjecture}
A principally polarized Abelian variety $X,\theta$ is the isoPrymian of a smooth curve with a pair of commuting involutions, one of which having four fixed points, and another leaving all holomorphic Prym differentials of the first involution to be invariant, if, and only if the function
\[
 \psi=\frac{\what{\theta}_1(A+U_1z+U_2\ovl{z}+Z)}{\what{\theta}_1(U_1z+U_2\ovl{z}+Z)} \cdot e^{p_1z+p_2\ovl{z}}
%\exp\left(z\int^P\Omega_1+\ovl{z}\int^P\Omega_2\right)
\]
satisfies to the equation
$\displaystyle ({\partial\ovl{\partial}}+u)\psi=0$
with $u=2\partial\ovl{\partial}\ln\theta(U_1z+U_2\ovl{z}+Z)+C$  for some constant $A,U_1,U_2\in\C^h$ and $p_1,p_2,C\in\C$, and each $Z$ belonging to a $(h-1)$-dimensional subvariety in $X$.
\end{conjecture}
\refP{psi2} proves our conjecture in the "only if"\ direction.

For $k=0$ the function $\what\theta_1$ has to be replaced by $\theta$, there will be two fixed points, and we will come to Krichever's theorem \cite{Krich_Prym_1}.

The author is grateful to A.P.Veselov and I.A.Taimanov for discussions.

%%%%%%%%%%%%%%%%%%%%%%%%%%%%%%%%%%%%%%%%
%%%%%%%%%%%%%%%%%%%%%%%%%%%%%%%%%%%%%%%%%%%%%%%%%%%%%%%%%%%%%%%%%
%%%%%%%%%%%%%%%%%%%%%%%%%%%%%%%%%%%%%%%%%%%%%%%%%%%%%%%%%%%%%%%%%%%%
%\subsection*{Обсуждение}

%Если взять
%\vskip5pt
%$\displaystyle \psi=\frac{\what{\theta}_m(A+U_1z+U_2\ovl{z}+Z)}{\what{\theta}_m(U_1z+U_2\ovl{z}+Z)} \cdot \exp\left(z\int^P\Omega_1+\ovl{z}\int^P\Omega_2\right)$
%\vskip5pt
%и потребовать что она является решением при любом $m\le k$, доказательство становится очевидным. При $m=0$ теорема Кричевера даёт нам кривую с двумя гладкими и любым числом двойных точек ветвления. Теперь разрешение $k$ из последних даёт кривую с $2(k+1)$ гладкими точками ветвления.

%%%%%%%%%%%%%%%%%%%%%%%%%%%%%%%%%%%%%%%%%
%%%%%%%%%%%%%%%%%%%%%%%%%%%%%%%%%%%%%%%%%%%%%%%%%%%%%%%%%%%%%%%%%

\bibliographystyle{amsalpha}

\begin{thebibliography}{A}
%%%%%%%%%%%%%%%%%%%%%%%%%%%%%%%%%%%%%%%%
\define\PL{Phys. Lett. B}
\define\NP{Nucl. Phys. B}
\define\LMP{Lett. Math. Phys. }
\define\JGP{JGP}
\redefine\CMP{Commun. Math. Phys. }
\define\JMP{J.  Math. Phys. }
\define\Izv{Math. USSR Izv. }
\define\FA{Funct. Anal. and Appl.}
\def\Pnas{Proc. Natl. Acad. Sci. USA}
\def\PAMS{Proc. Amer. Math. Soc.}
\def\UMNr{Russ. Math. Surv.}

%%%%%%%%%%%%%%%%%%%%%%%%%%%%%%%
%%%%%%%%%%%%%%%%%%%%%%%%%%%%%%%
\bibitem{Dubr_Krich_Nov}
%Б. А. Дубровин, И. М. Кричевер, С. П. Новиков. \emph{Уравнение Шредингера в периодическом поле и римановы поверхности}. Докл. АН СССР, 229:1 (1976), 15--18.
%
B.~A.~Dubrovin, I.~M.~Krichever, S.~P.~Novikov. \emph{The Schr\"odinger equation in a periodic field and Riemann surfaces}. Dokl. Akad. Nauk SSSR, 1976, Vol. 229, issue 1, p. 15--18.

\bibitem{Nov_Ves_1}
%А. П. Веселов, С. П. Новиков. \emph{Конечнозонные двумерные потенциальные операторы Шредингера. Явные формулы и эволюционные уравнения}, Докл. АН СССР, 1984, том 279, номер 1,  20--24.
%
A.~P.~Veselov, S.~P.~Novikov. \emph{Finite-gap two-dimensional potential Schrodinger operators. Explicit formulas and evolution equations}.
Dokl. Akad. Nauk SSSR, 1984, Vol. 279, issue 1, p. 20--24.

\bibitem{Nov_Ves_4}
% А.П. Веселов,  С.П. Новиков. \emph{Конечнозонные двумерные операторы Шредингера. Потенциальные операторы}, Докл. АН СССР, 1984, том 279, номер 4, 784--788.

Veselov A.P., Novikov S.P. \emph{Finite-gap two-dimensional Schr\"{o}dinger} operators. Potential operators. Doklady, 1984, Vol. 279, issue 4, p. 784--788.

\bibitem{Taiman_Mat_sb}
%И.А.Тайманов. \emph{Многообразия Прима разветвлённых накрытий и нелинейные уравнения.} Матем. сб., 1990, том 181, № 7, стр. 934--950.

I.~A.~Taimanov. \emph{Prym varieties of branched coverings and nonlinear equations}. Math. USSR-Sb., 1991, Vol. 70, issue 2, p. 367--384.

\bibitem{Taiman_UMN}
%И.А.Тайманов. \emph{Секущие абелевых многообразий, тэта-функции и солитонные уравнения.} УМН, 1997, том 52, выпуск 1(313), стр. 149--224.
%
I.~A.~Taimanov. \emph{Secants of Abelian varieties, theta functions, and soliton equations}. Russian Math. Surveys, 1997, Vol. 52, issue 1,
p. 147--218.

\bibitem{Krich_Prym_1}
I. Krichever. \emph{A characterization of Prym varieties}. International Mathematics Research Notices, Volume 2006, Article ID 81476, DOI: 10.1155/IMRN/2006/81476,
arXiv:math/0506238.



\bibitem{Krich-Grush_DUKE}
S.Grushevsky, I.Krichever
\emph{Integrable  discrete Schr\"{o}dinger equations and a characterization of Prym varieties by a pair of quadrisecants}.
Duke Mathematical Journal, Vol. 152, No. 2, p. 317--371.

\bibitem{Krich_Met_alg_geom}
%И. М. Кричевер. \emph{Методы алгебраической геометрии в теории нелинейных уравнений}. УМН, 32:6(198) (1977), 183--208.
%
I.~M.~Krichever. \emph{Methods of algebraic geometry in the theory of non-linear equations}. Russian Math. Surveys, 1977, Vol. 32, issue 6,
p. 185--213.
%
\bibitem{Fay}
J.D.Fay. \emph{Theta-functions on Riemann surfaces}. Lecture notes in mathematics, Vol. 352, Springer--Verlag, 1973.


\bibitem{Sh_Matsb_25}
%О.К.Шейнман. \emph{Обращение преобразования Абеля--Прима при наличии дополнительной инволюции}. Математический сборник, том 216 (2025), № 12, 125--144.
%
O.~K.~Sheinman. \emph{Inversion of the Abel--Prym map in presence of an additional involution}. Sb. Math., 2025, Vol. 216, issue 12, p. 1754--1772.

\bibitem{Sh_rJacobi}
O.K.Sheinman. \emph{Inversion of the Abel--Prym map for real curves with involutions}. ArXiv: 2511.04229.

\bibitem{Sh_TMPh_26}
%О.К. Шейнман. \emph{Обращение Якоби и решения систем Хитчина}. Теоретическая и математическая физика, Том 226, № 3, 2026, стр. 452--464.
%
O.~K.~Sheinman. \emph{Jacobi inversion and solutions of Hitchin systems}. Theor Math Phys, 2026, Vol. 226, p. 393--403. https://doi.org/10.1134/S0040577926030025



%%%%%%%%%%%%%%%%%%%%%% %%%%%%%%%%%%%%%% %%%%%%%%%%%%%% %%%%%%%%%%%%

%%%%%%%%%%%%%%%%%%%%%% %%%%%%%%%%%%%%%% %%%%%%%%%%%%%% %%%%%%%%%%%%
%%%%%%%%%%%%%%%%%%%%%%%%%%%%%%%%%%%%%%%%%%%%%%%%%%%%
\end{thebibliography}

\end{document}
%%%%%%%%%%%%%%%%%%%%%%%%%%%%%%%%%%%%%%%%%%%%%%%%
%%   THE END
%%%%%%%%%%%%%%%%%%%%%%%%%%%%%%%%%%%%%%%%%%%